\newcommand{\newrefformat}[2]{}
\theoremstyle{plain}   
\newtheorem{thm}{Theorem}[section] 
\newtheorem{cor}{Corollary}[section]
\let\c@cor\c@thm\makeatother
\newtheorem{lemma}{Lemma}[section]
\let\c@lemma\c@thm\makeatother
\newtheorem{prop}{Proposition}[section]
\let\c@prop\c@thm\makeatother
\let\c@claim\c@thm\makeatother
\theoremstyle{definition}
\newtheorem{defn}{Definition}[section]
\let\c@defn\c@thm\makeatother
\newtheorem{const}{Construction}[section]
\let\c@const\c@thm\makeatother
\let\c@notn\c@thm\makeatother
\let\c@outline\c@thm\makeatother
\theoremstyle{remark}
\newtheorem{rem}{Remark}[section]
\let\c@rem\c@thm\makeatother
\newtheorem{ex}{Example}[section]
\let\c@ex\c@thm\makeatother
\let\c@observationn\c@thm\makeatother
\let\c@equation\c@thm
\numberwithin{equation}{section}
\crefname{lemma}{Lemma}{Lemmas}
\crefname{thm}{Theorem}{Theorems}
\crefname{defn}{Definition}{Definitions}
\crefname{notn}{Notation}{Notations}
\crefname{const}{Construction}{Constructions}
\crefname{prop}{Proposition}{Propositions}
\crefname{rem}{Remark}{Remarks}
\crefname{cor}{Corollary}{Corollaries}
\crefname{equation}{Display}{Displays}
\crefname{ex}{Example}{Examples}
\tikzstyle{Vertex}=[fill=black, draw=white, shape=circle, tikzit shape=circle, scale=0.4]
\tikzstyle{red rectangle}=[fill=red, draw=black, shape=rectangle]
\tikzstyle{smallDot}=[fill=black, draw=black, shape=circle, scale=.1]
\tikzstyle{new edge style 0}=[-, draw=red]
\tikzstyle{new edge style 1}=[-, draw={rgb,255: red,1; green,69; blue,255}]
\tikzstyle{new edge style 2}=[-, draw={rgb,255: red,0; green,255; blue,42}]
\tikzstyle{dotted edge}=[-, dashed]
\tikzstyle{new edge style 3}=[{|->}]
\tikzstyle{new edge style 4}=[{|->}, dashed]
\tikzstyle{purple line}=[-, draw={rgb,255: red,150; green,32; blue,252}]
\tikzstyle{forest line}=[-, draw={rgb,255: red,37; green,112; blue,37}]
\tikzstyle{Red arrow}=[draw=red, {|->}]
\tikzstyle{Orange Edge}=[-, draw={rgb,255: red,255; green,128; blue,0}]
\tikzstyle{Dotted Green Arrow}=[draw=green, {|->}, dashed]
\tikzstyle{Dotted Red Line}=[-, draw=red, dashed]
\tikzstyle{new edge style 5}=[->]
\tikzset{/tikz/commutative diagrams/arrow style=tikz,>=stealth} 
\tikzset{mm/.style={execute at begin node=$\displaystyle, execute at end node=$}}
\tikzset{tikzob/.style={commutative diagrams/every diagram, every cell}}
\tikzset{tikzar/.style={commutative diagrams/.cd, every arrow, every label, font={\small}}}
\tikzset{tikzsquiggle/.style={decorate, decoration={
    snake,
    segment length=8pt,
    amplitude=.9pt,post=lineto,
    post length=2pt}}}
\tikzset{cross line/.style={preaction={draw=white, -, line width=6pt}}}
\newcommand{\NC}{\operatorname{NC}}
\newcommand{\Hom}{{\rm{Hom}}}
\newcommand{\F}{\mathcal{F}}
\newcommand{\Co} {\mathbb{C}}
\newcommand \transfers {\operatorname{Tr}}
\newcommand \DE {\mathcal{E}}
\newcommand{\cB}{\mathcal{B}}
\newcommand{\cC}{\mathcal{C}}
\newcommand{\cD}{\mathcal{D}}
\newcommand{\cL}{\mathcal{L}}
\newcommand{\cM}{\mathcal{M}}
\newcommand{\cN}{\mathcal{N}}
\newcommand{\cO}{\mathscr{O}}
\newcommand{\cP}{\mathcal{P}}
\newcommand{\cR}{\mathcal{R}}
\newcommand{\Ho}{\operatorname{Ho}}
\newcommand{\NOp}{N_\infty\text{-}\mathbf{Op}}
\newcommand \sub{\mathrm{Sub}}
\newcommand \trans{\mathrm{Tr}}
\newcommand \op{\mathrm{op}}
\newcommand{\Cat}{\operatorname{Cat}}
\newcommand{\rR}{\;\mathcal{R}\;}
\title{Self-duality of the lattice of transfer systems \\ via weak factorization systems}
\author{Evan E. Franchere}
\address{Department of Mathematics, Reed College, Portland, OR 97202, USA}
\email{franchev@reed.edu}
\author{Kyle Ormsby}
\address{Department of Mathematics, Reed College, Portland, OR 97202, USA}
\email{ormsbyk@reed.edu}
\author{Ang\'{e}lica M. Osorno}
\address{Department of Mathematics, Reed College, Portland, OR 97202, USA}
\email{aosorno@reed.edu}
\author{Weihang Qin}
\address{Department of Mathematics, Reed College, Portland, OR 97202, USA}
\email{qinw@reed.edu}
\author{Riley Waugh}
\address{Department of Mathematics, Reed College, Portland, OR 97202, USA}
\email{waughr@reed.edu}
\begin{document}
\begin{abstract}
For a finite group $G$, $G$-transfer systems are combinatorial objects which encode the homotopy category of $G$-$N_\infty$ operads, whose algebras in $G$-spectra are $E_\infty$ $G$-spectra with a specified collection of multiplicative norms. For $G$ finite Abelian, we demonstrate a correspondence between $G$-transfer systems and weak factorization systems on the poset category of subgroups of $G$. This induces a self-duality on the lattice of $G$-transfer systems.
\end{abstract}

\maketitle

\section{Introduction}\label{section:intro}

In stable homotopy theory, commutative ring spectra are algebras over an $E_\infty$ operad.  The equivariant story is more subtle.  In \cite{HillBlumberg}, A.~Blumberg and M.~Hill introduced $N_\infty$ operads to capture the varying classes of multiplicative norm maps supported by equivariant ring spectra. By work of Blumberg and Hill \cite{HillBlumberg}, P.~Bonventre and L.~Pereira \cite{BP}, J.~Guti\'errez and D.~White \cite{GW}, and J.~Rubin \cite{rubin_Ninfty}, we know that the homotopy category of $N_\infty$ operads may be identified with the lattice of indexing systems for the group of equivariance. The indexing system associated to an operad records which norm maps are being encoded by the operad. Further work of Rubin \cite{rubin_steiner} and  S.~Balchin, D.~Barnes, and C.~Roitzheim \cite{CPCatalan}, identified transfer systems as the essential combinatorial data of indexing systems, thus proving that the homotopy category of $N_\infty$ operads is equivalent to the lattice of transfer systems. The combinatorics of this lattice thus plays a central role in the study of equivariant ring spectra.

Fix a finite group $G$. We recall the basics of $G$-$N_\infty$ operads and $G$-transfer systems in \cref{secn:trans}.  For the purposes of this introduction, note that a $G$-transfer system is a relation $\cR$ on $\sub(G)$, the subgroup lattice of $G$, that refines inclusion\footnote{A relation $\rR$ refines the inclusion relation $\le$ when $H \rR K$ implies $H\le K$.} and satisfies the following conditions:
\begin{itemize}
    \item (reflexivity) $H \rR H$ for all $H\leq G$,
    \item (transitivity) $K \rR H$ and $L \rR K$ implies $L \rR H$,
    \item (closed under conjugation) $K \rR H$ implies that $(gKg^{-1}) \rR (gHg^{-1})$ for all $g\in G$,
    \item (closed under restriction) $K \rR H$ and $M\leq H$ implies $(K \cap M) \rR M$.
\end{itemize}
In other words, a transfer system is a sub-poset of $\sub(G)$ closed under conjugation and restriction.  These objects form a bounded lattice $\trans(G)$ ordered under refinement (see \cref{prop:bounded lattice}).

In this paper, we establish that $\trans(G)$ is self-dual whenever $G$ is Abelian. Our proof proceeds via a surprising connection with weak factorization systems.  A weak factorization system on a category $\cC$ is a pair of classes of morphisms $(\cL,\cR)$ satisfying the factorization and lifting axioms specified in \cref{defn: WFS}.  Considering $\sub(G)$ as a poset category, we show in \cref{thm: 1-1 WFS and TS} that there is a bijective correspondence between transfer systems and weak factorization systems given by
\[\cR \longleftrightarrow ({}^\boxslash \cR,\cR)\]
where ${}^\boxslash \cR$ denotes the morphisms in $\sub(G)$ having the left-lifting property with respect to $\cR$.

When $G$ is Abelian, the subgroup lattice $\sub(G)$ carries a self-duality $\nabla$. In \cref{thm:cat-involution}, we prove that
\[
\begin{aligned}
  \phi\colon \trans(G)&\longrightarrow \trans(G)\\
  \cR&\longmapsto (({}^\boxslash \cR)^\op)^\nabla
\end{aligned}{}
\]
is a self-duality on $\trans(G)$. This generalizes the self-duality on transfer systems for cyclic groups of squarefree order observed in \cite{BBPR}; see \cref{thm:compare BBPR}. We anticipate that the self-duality of $\trans(G)$ will prove useful in future enumerative work on transfer systems and $N_\infty$ operads.

\subsection*{Organization}
\cref{secn:posets,secn:trans} cover necessary background material on partially ordered sets and transfer systems, respectively.

The real work is contained in \cref{section: Category Stuff}, which is organized into five subsections.  Subsection \ref{trans sys on poset} abstracts the notion of a transfer system for an Abelian group to the context of arbitrary posets.  Subsection \ref{sec:wfs} introduces weak factorization systems and proves that they are in bijection with transfer systems on a poset (\cref{thm: 1-1 WFS and TS}). In subsection \ref{sec:self-dual}, we prove \cref{thm:cat-involution} on self-duality of the transfer system lattice.  Subsection \ref{sec:BBPR} compares our self-duality (which is defined for every finite Abelian group) to the self-duality of \cite{BBPR} on $\trans(G)$ for $G$ cyclic of squarefree order. In Subsection \ref{sec:slats}, we illustrate a numerical symmetry of the duality, namely the number of ``slats'' in a transfer system on a cyclic group of order $p^nq$ for $p,q$ distinct primes.

Finally, \cref{sec:nc} produces a direct bijection between transfer systems for a cyclic group of order $p^n$, $p$ prime, and noncrossing partitions of $\{0,1,\ldots,n\}$.  This gives a novel proof of the Catalan enumeration of such transfer systems originally found in \cite{CPCatalan}, and we deduce a new corollary linking minimal generation of transfer systems to the Narayana numbers. This section is independent of the rest of the paper.

\subsection*{Notation}
We use the following notation throughout. 
\begin{itemize}
  \item $G$ --- a finite group, eventually Abelian.
  \item $\sub(G)$ --- the subgroup lattice of $G$.
  \item $\trans(G)$ --- the lattice of transfer systems on $G$ under refinement.
  \item $\cP$ --- a poset, considered either as a set with a relation or as a category in which there is at most one morphism between each pair of objects.
  \item $[n]$ --- the poset $\{0<1<\cdots<n\}$.
  \item $\cB_n$ --- the Boolean poset of subsets of $\{1,2,\ldots,n\}$ under inclusion.
  \item $\cD_n$ --- the divisor poset of $n$ under divisibility.
  \item $C_n$ --- the cyclic group of order $n$.
  \item $\cP^\op$ --- the dual of a poset $\cP$.
  \item $\cR$ --- a transfer system, considered either as a relation or a collection of morphisms.
  \item ${}^\boxslash \cM$ and $\cM^\boxslash$ --- morphisms with the left (resp.~right) lifting property with respect to a class of morphisms $\cM$.
  \item $\cL\boxslash \cR$ --- the property $\cL\subseteq {}^\boxslash \cR$ (equivalently, $\cR\subseteq \cL^\boxslash$).
  \item $\nabla$ --- self-duality on a poset.
\end{itemize}

\subsection*{Acknowledgements}
The authors thank Jonathan Rubin, who suggested this topic, and Constanze Roitzheim, who explained the contents of \cite{CPCatalan, BBPR}, and suggested avenues of exploration. We also thank Hugh Robinson for helpful correspondence and for authoring OEIS entry A092450.\footnote{Computer calculations of transfer systems that agreed with A092450 were our first hint at the link between transfer and weak factorization systems.} Finally, we thank the anonymous referee for helpful suggestions regarding the exposition in this paper. This research was supported by NSF grant DMS-1709302.

\section{Preliminaries on posets}\label{secn:posets}

In this section, we briefly collect some well-known facts and examples from the theory of partially ordered sets and lattices. We refer the reader to \cite[Chapter 3]{Stanley} for a comprehensive reference.

Recall that a \emph{partially ordered set} or \emph{poset} $(\cP,\leq)$ consists of a set $\cP$, together with a binary relation that is reflexive, antisymmetric and transitive. We say that $x<y$ is a \emph{cover relation} in $\cP$ if there is no $z\in \cP$ such that $x<z<y$.

We represent a finite poset $\cP$ with a \emph{Hasse diagram}. This is a directed graph whose vertices are the elements of $\cP$, edges are cover relations, and such that if $x<y$, then $x$ is drawn below $y$. 

\begin{ex}\label{ex:posets} Let $n$ be a natural number. See \cref{fig:hasse} below for the Hasse diagrams of the following posets.
 \begin{enumerate}
  \item\label{linear} We denote by $[n]$ the set $\{0,1,\dots,n\}$ with its usual order structure. 
  \item\label{Boolean} The Boolean poset $\cB_n$ is the poset of subsets of $\{1,2,\dots,n\}$ under inclusion. It is isomorphic to the product of $n$ copies of $[1]$ with itself. The Hasse diagram corresponds to the edges of an $n$-dimensional cube.
  \item\label{divisors} The set of positive divisors of $n$ forms a poset $\cD_n$ under divisibility. If $n=p_1^{a_1}\dots p_k^{a_k}$ is the prime factorization of $n$, then $\cD_n$ is isomorphic to $[a_1]\times \dots \times [a_k]$ by identifying the exponents of the primes. Its Hasse diagram is a $k$-dimensional grid.
  \item\label{subgroups} For a group $G$, we denote by $\sub(G)$ the poset of subgroups of $G$ under inclusion. Note that $\sub(C_n)$ is isomorphic to $\cD_n$, where $C_n$ denotes the cyclic group of order $n$.
 \end{enumerate}
\end{ex}

\begin{center}
\begin{figure}
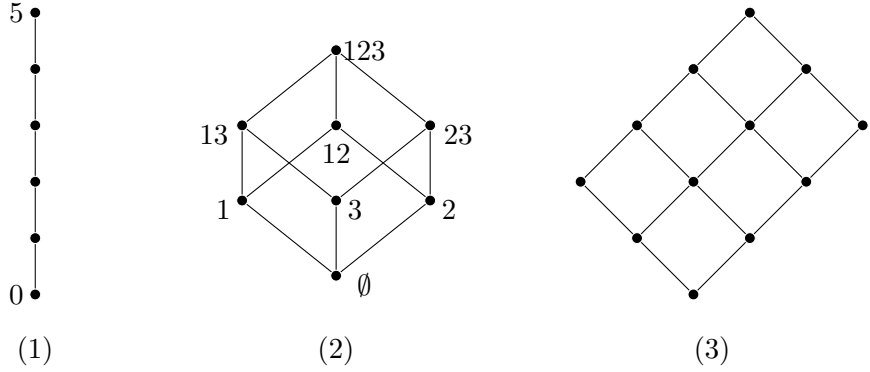

\ctikzfig{HasseDiagrams}
\caption{Hasse diagrams for (1) $[5]$, (2) $\cB_3$, and (3) $[3]\times [2] \cong \cD_{p^3q^2} \cong \sub(C_{p^3q^2})$ for $p,q$ distinct primes.}\label{fig:hasse}
\end{figure}
\end{center}

For $x$ and $y$ in a poset $\cP$, their least upper bound, if it exists, is denoted by $x \vee y$ and is called the \emph{join}. Similarly, their greatest lower bound is denoted by $x\wedge y$ and is called the \emph{meet}. A \emph{lattice} is a poset for which every pair of elements has both a join and a meet. A poset is \emph{bounded} if it has a least and a greatest element.

All of the posets of \cref{ex:posets} are bounded lattices. In the case of $\sub(G)$, the meet of two subgroups is given by their intersection, while the join is the subgroup generated by their union.

\begin{rem}\label{rem:meet-lattice}
As noted in \cite[Proposition 3.3.1]{Stanley}, if a finite poset $\cP$ has all meets and has a greatest element, then it is a lattice. Dually, if $\cP$ has all joins and has a least element, then it is a lattice.
\end{rem}

Given a poset $\cP$, we consider it as a category whose objects are the elements of $\cP$ and whose morphisms are given by the relation $\leq$. In other words, there is a unique morphism from $x$ to $y$ whenever $x\leq y$, and no morphisms otherwise. Note that every diagram in $\cP$ commutes.

The \emph{dual} of $\cP$, denoted by $\cP^\op$, is the poset with the same underlying set but with relation reversed. Note that as categories, $\cP^\op$ is precisely the opposite category of $\cP$.

\section{Transfer systems and $N_\infty$ operads}\label{secn:trans} 

Transfer systems, as originally and independently defined by J.~Rubin \cite[Definition 3.4]{rubin_steiner} and S.~Balchin, D.~Barnes, and C.~Roitzheim \cite{CPCatalan}, are meant to isolate the essential data necessary to record all the norms/transfer maps encoded by $N_\infty$ operads. Here we recall the necessary definitions, make the preceding statement precise, and record some basic facts about transfer systems.

Let $G$ be a finite group and for $n\ge 0$ let $\mathfrak S_n$ denote the symmetric group on $n$ letters.

\begin{defn}
A \emph{$G$-$N_\infty$ operad} is a symmetric operad $\cO$ on $G$-spaces satisfying the following three properties:
\begin{itemize}
\item for all $n\ge 0$, $\cO(n)$ the $G\times \mathfrak S_n$-space is $\mathfrak S_n$-free,
\item for every $\Gamma\le G\times \mathfrak S_n$, the $\Gamma$-fixed point space $\cO(n)^\Gamma$ is empty or contractible, and
\item for all $n\ge 0$, $\cO(n)^G$ is nonempty.
\end{itemize}
A \emph{map of $G$-$N_\infty$ operads} $\varphi\colon \cO_1\to\cO_2$ is a morphism of operads in $G$-spaces, and as such, the map at level $n$ is $G\times \mathfrak S_n$-equivariant.  The associated category of $G$-$N_\infty$ operads is denoted $\NOp^G$.

A map $\varphi\colon \cO_1\to\cO_2$ of $G$-$N_\infty$ operads is a \emph{weak equivalence} if $\varphi\colon \cO_1(n)^\Gamma\to \cO_2(n)^\Gamma$ is a weak homotopy equivalence of topological spaces for all $n\ge 0$ and $\Gamma\le G\times \mathfrak S_n$. The associated homotopy category (formed by inverting weak equivalences) is denoted $\Ho(\NOp^G)$.
\end{defn}

\begin{rem}
Every $G$-$N_\infty$ operad $\cO$ is a naive $E_\infty$ operad, and thus parametrizes an operation that is associative and commutative up to coherent higher homotopies. In addition, $\cO$ \emph{admits a $T$-norm} for those finite $H$-sets $T$ for which $\cO(|T|)^{\Gamma(T)}$ is nonempty, where $H$ is a subgroup of $G$ and $\Gamma(T)\le G\times \mathfrak S_{|T|}$ is the graph of a permutation representation of $T$, and moreover, these norms are compatible up to coherent homotopies. For a $G$-space $X$, a $T$-norm is a $G$-map
\[G\times_H X^T \longrightarrow X,\]
where $X^T$ denotes the $H$-space of all maps from $T$ to $X$, with $H$ acting by conjugation.
In particular, if $K\leq H$ and $T=H/K$, a $T$-norm induces a ``wrong-way map''
\[X^K \longrightarrow X^H\]
between fixed-point spaces (see \cite[\S6]{HillBlumberg} and \cite[Remark 3.5]{rubin_steiner} for further discussion).
This is the sense in which $G$-$N_\infty$ operads parametrize admissible norms.
\end{rem}

We now turn to transfer systems, which we will eventually relate back to $N_\infty$ operads. Recall that $\sub(G)$ denotes the poset of subgroups of $G$ under inclusion. For $g\in G$ and $H\in \sub(G)$, let ${}^gH = gHg^{-1}\in \sub(G)$ denote the $g$-conjugate of $H$.

\begin{defn}
Let $G$ be a finite group. A \emph{$G$-transfer system} is a relation $\cR$  on $\sub(G)$ that refines the inclusion relation and satisfies the following properties:
\begin{itemize}
    \item (reflexivity) $H \rR H$ for all $H\leq G$,
    \item (transitivity) $K \rR H$ and $L \rR K$ implies $L \rR H$,
    \item (closed under conjugation) $K \rR H$ implies that ${}^gK \rR {}^gH$ for all $g\in G$,
    \item (closed under restriction) $K \rR H$ and $M\leq H$ implies $(K \cap M) \rR M$.
\end{itemize}
\end{defn}

\begin{center}
\begin{figure}
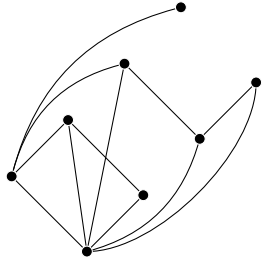

    \ctikzfig{transfexample}
    \caption{An example of a transfer system on $C_{p^3q}$.}\label{fig:transfexample}
\end{figure}
\end{center}

A $G$-transfer system $\cR$ can alternatively be described as a partial order on $\sub(G)$ that refines $\leq$ and is closed under conjugation and under restriction.

We represent transfer systems by drawing the corresponding directed graph, ignoring the trivial edges (\emph{i.e.}, self loops), see \cref{fig:transfexample}. Note that this is not the Hasse diagram for the corresponding poset, as it will include non-covering relations.

\begin{rem}
If $G$ is a Dedekind group (so all subgroups are normal) conjugation is trivially satisfied. We will later concentrate on Abelian groups, the most common class of Dedekind groups.
\end{rem}

\begin{defn}
Let $\trans(G)$ denote the poset of all $G$-transfer systems ordered under refinement. Thus, $\cR_1 \leq \cR_2 $ if and only if  for all $ K,H \in \sub(G)$, if $K\;\cR_1\; H$ then $K\; \cR_2 \;H$.
\end{defn}

Note that if we consider a binary relation on a set $S$ as a subset of $S\times S$, refinement is just set inclusion.

The following construction, based on the work of \cite{HillBlumberg,GW,BP,rubin_steiner,rubin_Ninfty,CPCatalan}, links $G$-$N_\infty$ operads and $G$-transfer systems.  Given $\cO\in \NOp^G$, define $\cR_\cO$ by the rule
\[
  K\;\cR_\cO\; H \iff K\le H\text{ and }\cO([H:K])^{\Gamma(H/K)}\ne \varnothing
\]
where $\Gamma(H/K)$ is the graph of some permutation representation $H\to \mathfrak S_{[H:K]}$ of $H/K$.

\begin{thm}
The assignment
\[
\begin{aligned}
  \NOp^G&\longrightarrow \trans(G)\\
  \cO&\longmapsto \cR_\cO
\end{aligned}
\]
induces an equivalence
\[
  \Ho(\NOp^G)\simeq \trans(G)
\]
(considering the poset $\trans(G)$ as a category).
\end{thm}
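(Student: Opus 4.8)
The plan is to factor the claimed equivalence through the intermediate notion of a \emph{$G$-indexing system}. Recall that the indexing system $\cI_\cO$ of an operad $\cO \in \NOp^G$ records, for each subgroup $H \le G$, the collection of finite $H$-sets $T$ that are \emph{admissible}, meaning that $\cO(|T|)^{\Gamma(T)} \ne \varnothing$. I would first verify the formal properties of this assignment: the axioms on $\cO$ force the admissible sets to be closed under the operations (restriction to subgroups, conjugation, disjoint union, and self-induction) that define an indexing system in the sense of \cite{HillBlumberg}. Thus $\cO \mapsto \cI_\cO$ lands in the poset of indexing systems ordered by inclusion, and the transfer relation $\cR_\cO$ is recovered from $\cI_\cO$ by restricting attention to the transitive $H$-sets of the form $H/K$.

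Next I would establish that $\Ho(\NOp^G)$ is itself a poset and that the functor to indexing systems is fully faithful. The key observation is that the defining condition that $\cO(n)^\Gamma$ be empty or contractible makes every space of operad maps either empty or weakly contractible; hence after inverting weak equivalences there is at most one morphism between any two objects, and $\Ho(\NOp^G)$ is a poset. Fully faithfulness then amounts to showing that a map $\cO_1 \to \cO_2$ exists up to homotopy precisely when $\cI_{\cO_1} \subseteq \cI_{\cO_2}$; the forward implication is immediate from the definitions, and the converse is the content of the comparison results of Blumberg--Hill \cite{HillBlumberg}.

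The crux of the argument --- and the step I expect to be the main obstacle --- is \emph{essential surjectivity}, that is, the realization statement that every $G$-indexing system $\cI$ arises as $\cI_\cO$ for some $N_\infty$ operad $\cO$. This cannot be checked by formal category theory and instead requires an explicit construction of an operad with prescribed admissible sets; it is precisely the content of the independent work of Bonventre--Pereira \cite{BP}, Guti\'errez--White \cite{GW}, and Rubin \cite{rubin_Ninfty}, each of whom builds such operads by different means. I would invoke one of these realization theorems to conclude that $\cO \mapsto \cI_\cO$ descends to an equivalence between $\Ho(\NOp^G)$ and the poset of indexing systems.

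Finally, I would identify indexing systems with transfer systems. Following Rubin \cite{rubin_steiner} and Balchin--Barnes--Roitzheim \cite{CPCatalan}, the passage $\cI \mapsto \cR$ that remembers only when the quotient $H/K$ is admissible is a bijection: an indexing system is generated under disjoint union and induction by its transitive pieces, so it is determined by its associated relation, and one checks that the closure conditions defining an indexing system translate exactly into reflexivity, transitivity, closure under conjugation, and closure under restriction. Composing this bijection with the equivalence of the previous paragraph yields the stated equivalence $\Ho(\NOp^G) \simeq \trans(G)$.
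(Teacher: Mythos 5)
Your proposal is correct and takes essentially the same route as the paper, which in fact offers no proof of this theorem at all: it treats it as a known result assembled from exactly the chain you describe, citing \cite{HillBlumberg} for the fully faithful comparison of $\Ho(\NOp^G)$ with indexing systems, the realization theorems of \cite{BP,GW,rubin_Ninfty} for essential surjectivity, and \cite{rubin_steiner,CPCatalan} for the bijection between indexing systems and transfer systems. Your outline is the standard argument from that literature, correctly decomposed, so there is nothing to correct.
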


We conclude this section by recalling a few more facts about transfer systems.

\begin{prop}\label{prop:bounded lattice}
The poset $(\trans(G),\leq)$ is a bounded lattice. 
\end{prop}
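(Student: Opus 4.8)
The plan is to produce explicit least and greatest elements, to realize all meets as intersections of relations, and then to appeal to \cref{rem:meet-lattice} to upgrade the resulting meet-semilattice to a lattice; finiteness of $\trans(G)$ is what makes this last step available.

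First I would identify the two bounds. The least element is the \emph{trivial} transfer system, in which $H \rR H$ for every $H \le G$ and no further relations hold; the greatest element is the \emph{complete} transfer system given by the full inclusion relation, $K \rR H \iff K \le H$. Each of the four axioms (reflexivity, transitivity, closure under conjugation, closure under restriction) is immediate for both relations. For instance, restriction-closure of the trivial system holds because $H \rR H$ and $M \le H$ force $H \cap M = M \rR M$, while for the complete system it is the observation that $K \le H$ and $M \le H$ imply $K \cap M \le M$. Since refinement is set inclusion of relations, every transfer system sits between these two, so they are genuinely the bottom and top of $\trans(G)$.

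Next I would show that meets exist and are computed by intersection. Given transfer systems $\cR_1$ and $\cR_2$ (or indeed any family), I claim $\cR_1 \cap \cR_2$ is again a transfer system: each of the four closure conditions is a universally quantified implication whose hypotheses, if they hold in the intersection, hold in each $\cR_i$ separately, whence the conclusion lands in each $\cR_i$ and therefore in their intersection. As $\cR_1 \cap \cR_2$ is the largest relation contained in both, it is their greatest lower bound, so $\trans(G)$ has all meets.

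Finally, because $\sub(G)$ is finite there are only finitely many relations on $\sub(G)$, so $\trans(G)$ is finite. It has all meets and a greatest element, so \cref{rem:meet-lattice} yields that $\trans(G)$ is a lattice; combined with the least element, it is bounded. I expect the only genuine subtlety to be the join, since one cannot simply take the union $\cR_1 \cup \cR_2$: that relation typically fails both transitivity and restriction-closure. The argument via \cref{rem:meet-lattice} sidesteps computing joins directly, but if an explicit description is wanted one can instead define $\cR_1 \vee \cR_2$ as the intersection of all transfer systems containing $\cR_1 \cup \cR_2$, which is a transfer system by the meet argument and is manifestly the least one refining both.
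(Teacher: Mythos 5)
Your proof is correct and follows essentially the same route as the paper: equality and inclusion as bottom and top, intersection as the meet, and \cref{rem:meet-lattice} (with finiteness) to upgrade to a lattice. Your closing remark on joins also matches the paper, which notes right after the proposition that the join is the transfer system generated by the union.
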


\begin{proof}
The least element in $\trans(G)$ is given by the equality relation in $\sub(G)$, while the greatest element is given by the inclusion relation, showing $\trans(G)$ is bounded. The intersection of two transfer systems is a transfer system, thus giving the meet. By \cref{rem:meet-lattice}, we get the desired result.\end{proof}

The join of two transfer systems can be explicitly described as the transfer system generated by their union \cite[Theorem A.2]{rubin_steiner}.

Previous work has revealed the cardinality and structure of transfer systems on the following groups: $C_{p^n}$ \cite{CPCatalan}, $ C_{pq}$ and $C_{pqr}$ \cite{BBPR}, $C_p \times C_p$, $Q_8$,  $S_3$, $D_{2p}$ \cite{rubin_steiner}. 
We expand on the case of $C_{p^n}$, for which the collection of transfer systems is enumerated by the Catalan numbers.

\begin{prop}[{\cite[Theorems 1 and 2]{CPCatalan}}]
\[| \transfers(C_{p^n}) | = \Cat(n+1),\]
where $\Cat(n+1)$ is the $(n+1)$th Catalan number. 
Moreover, the lattice structure on $\transfers(C_{p^n})$ corresponds to the Tamari lattice.
\end{prop}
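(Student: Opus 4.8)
Since $C_{p^n}$ is Abelian, the conjugation axiom is vacuous, and since $\sub(C_{p^n})\cong[n]$ is the chain $0<1<\cdots<n$ (identifying the subgroup $C_{p^i}$ with $i$), a transfer system on $C_{p^n}$ is precisely a reflexive, transitive relation $\cR$ on $[n]$ that refines $\le$ and is closed under restriction. On a chain the meet is $\min$, so the restriction axiom unwinds to: if $i\rR j$ and $i<k\le j$, then $i\rR k$. Hence for each $i$ the set $\{j : i\rR j\}$ is an interval $[i,t(i)]$ for some $t(i)\ge i$, so $\cR$ is encoded by a function $t\colon[n]\to[n]$ with $t(i)\ge i$. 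Transitivity then translates into the nesting condition: if $i\le k\le t(i)$, then $t(k)\le t(i)$; and conversely any such $t$ defines a transfer system. So the first step is to reduce the problem to counting and ordering these target functions.

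\textbf{Catalan count.} I would count such $t$ by conditioning on $m:=t(0)$. The nesting condition with $i=0$ forces $t(k)\le m$ for every $k\le m$, so no element of $\{0,\dots,m\}$ relates to any element of $\{m+1,\dots,n\}$, and it imposes no constraint coupling the two blocks. Thus $\cR$ decomposes as a transfer system on $\{1,\dots,m\}$ (the behaviour of $0$ being fixed, since $0$ relates to all of $[0,m]$) together with an independent transfer system on $\{m+1,\dots,n\}$. Writing $a_n=|\trans(C_{p^n})|$ and setting $a_{-1}:=1$, this gives
\[
a_n=\sum_{m=0}^{n} a_{m-1}\,a_{n-m-1}.
\]
Putting $\tilde a_k:=a_{k-1}$ rewrites this as $\tilde a_{n+1}=\sum_{m=0}^{n}\tilde a_m\,\tilde a_{n-m}$ with $\tilde a_0=1$, which is exactly the Catalan recursion; hence $a_n=\tilde a_{n+1}=\Cat(n+1)$, as claimed.

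\textbf{Tamari structure and the main obstacle.} The same split at $t(0)$ furnishes a recursive bijection from transfer systems on $[n]$ to binary trees with $n+1$ leaves (equivalently Dyck paths of semilength $n+1$), matching the Catalan recursion term by term. To prove that $(\trans(C_{p^n}),\le)$ is the Tamari lattice, I would show this bijection is an order isomorphism; since both sides are finite lattices of equal cardinality, it suffices to match cover relations in both directions. I expect this to be the crux. A cover in $\trans(C_{p^n})$ is obtained by adjoining a single new relation $i\rR j$ and closing under the axioms, and the task is to verify that under the bijection each such minimal enlargement corresponds to exactly one Tamari rotation, and conversely. The genuine difficulty is that refinement can increase $t(0)$, so the recursive decomposition used for the count does \emph{not} respect the order on the nose; reconciling the recursive bijection with the rotation (associativity) order, rather than establishing the enumeration, is where the real work lies.
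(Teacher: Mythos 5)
Your reduction and your count are both correct. Encoding a transfer system on $\sub(C_{p^n})\cong[n]$ by a target function $t$ with $t(i)\ge i$ and the nesting condition is a faithful translation of the axioms (restriction on a chain does force $\{j : i \rR j\}=[i,t(i)]$, and transitivity is exactly nesting), the split at $m=t(0)$ genuinely decouples $\{0,\dots,m\}$ from $\{m+1,\dots,n\}$, and the recurrence $a_n=\sum_{m=0}^{n}a_{m-1}a_{n-m-1}$ is the Catalan recursion, giving $|\transfers(C_{p^n})|=\Cat(n+1)$. Be aware, though, of where this sits relative to the paper: the proposition is quoted from \cite{CPCatalan}, and your recurrence argument is essentially the original proof given there, which the paper explicitly describes (``proceeds by checking that $|\transfers(C_{p^n})|$ satisfies the recurrence formula for the Catalan numbers''). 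The paper's own alternate proof of the enumeration, in \cref{sec:nc}, instead builds an explicit bijection $\psi\colon\transfers([n])\to\NC_{n+1}$ sending a transfer system to the noncrossing partition generated by its maximal edges (\cref{thm:nc-bijection}); that bijection buys a refinement your recurrence does not, namely the Narayana enumeration of transfer systems by minimal generating number.

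The genuine gap is the ``Moreover'' clause. You reduce the Tamari statement to matching cover relations under a recursive bijection with binary trees, and then leave exactly that step open --- and your own observation explains why it cannot be waved through: the decomposition at $t(0)$ is not compatible with refinement, so the bijection built from it carries no a priori order-theoretic content. The danger is not hypothetical: the paper's own Catalan bijection $\psi$ illustrates it, since the natural order on $\NC_{n+1}$ (the Kreweras lattice, ordered by refinement of partitions) is \emph{not} the Tamari lattice --- the paper explicitly warns that ``the Tamari lattice is a strict extension of the Kreweras lattice.'' So a perfectly natural Catalan bijection can fail to be an isomorphism onto the intended lattice, and ``two finite lattices of equal cardinality'' proves nothing, as there are many nonisomorphic lattices of Catalan cardinality. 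Your strategy of matching covers in both directions is a valid criterion for finite posets, but executing it --- verifying that adjoining a single edge $i\rR j$ and saturating under restriction and transitivity corresponds to exactly one tree rotation, and conversely --- is the entire content of \cite[Theorem 2]{CPCatalan} and is absent from your proposal. As written, you have proved the first sentence of the proposition and only stated a plan, with its central difficulty correctly identified but unresolved, for the second.
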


Here the Tamari lattice is the poset of binary trees with $n+1$ leaves ordered by tree rotation, first explored by D.~Tamari \cite{Tamari}. It forms the 1-dimensional skeleton of $K_{n+2}$, the $n$-dimensional associahedron \cite{Stasheff}.

The original enumeration of $\transfers(C_{p^n})$ in \cite{CPCatalan} proceeds by checking that $|\transfers(C_{p^n})|$ satisfies the recurrence formula for the Catalan numbers. In \cref{sec:nc}, we present an alternate proof based on noncrossing partitions.

\section{A categorical approach to transfer systems}\label{section: Category Stuff}

In this section we define transfer systems for arbitrary finite posets, generalizing the definition for Dedekind groups. We show that for a finite lattice $\cP$, there is a one-to-one correspondence between transfer systems on $\cP$ and weak factorization systems on $\cP$. Using this, we prove that $\trans(\cP)$ is self-dual whenever $\cP$ is a finite self-dual lattice. We compare this result with the involution defined for $G=C_{p_1\dots p_n}$ in \cite{BBPR}. 

\subsection{Transfer systems on posets}\label{trans sys on poset}
We now generalize the definition of a transfer system to an arbitrary poset and characterize them in categorical terms.

\begin{defn}
Let $\cP=(\cP,\leq)$ be a poset. A \emph{transfer system} on $\cP$ consists of a partial order $\rR$ on $\cP$ that refines $\leq$ and such that for all $x,y,z\in \cP$, if $x \rR y$, $z\leq y$, and $x \wedge z$ exists, then $(x \wedge z) \rR z$.
\end{defn}

As noted above, for a Dedekind group $G$, a $G$-transfer system is the same as a transfer system on $\sub(G)$. 

\begin{prop}\label{prop:transfer-pullbacks}
Let $\cP$ be a poset considered as a category. A collection of morphisms $\cR$ is a transfer system if and only if $\cR$ is a subcategory that contains all objects and is closed under pullbacks.
\end{prop}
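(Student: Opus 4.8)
The plan is to unwind both the categorical and the order-theoretic descriptions of $\cR$ into conditions on the underlying relation and check that they agree term by term. Since every morphism in the poset category $\cP$ goes from some $x$ to some $y$ with $x\le y$, a collection of morphisms $\cR$ is precisely a relation on $\cP$ that refines $\le$; so the refinement hypothesis is built into the notion of a collection of morphisms and requires no separate argument. It then remains to match (i) ``subcategory containing all objects'' with ``reflexive, transitive relation,'' and (ii) ``closed under pullbacks'' with the restriction clause.

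For (i), recall that a subcategory must contain the identity of each of its objects and be closed under composition. Requiring that $\cR$ contain all objects forces every identity $\mathrm{id}_x\colon x\to x$ to lie in $\cR$, i.e.\ $x\rR x$ for all $x$, which is reflexivity. Closure under composition says that whenever $x\to y$ and $y\to z$ lie in $\cR$ the composite $x\to z$ does too, i.e.\ $x\rR y$ and $y\rR z$ imply $x\rR z$; this is transitivity. Conversely, a reflexive, transitive relation refining $\le$ yields a subcategory with all objects. Antisymmetry need not be imposed separately: if $x\rR y$ and $y\rR x$ then $x\le y$ and $y\le x$, so $x=y$ by antisymmetry of $\le$. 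Hence ``subcategory with all objects'' and ``partial order refining $\le$'' describe the same collections.

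For (ii), I would first identify pullbacks in $\cP$. Given a cospan $x\to y\leftarrow z$, the pullback, when it exists, is the meet $x\wedge z$, fitting into the square with legs $x\wedge z\to x$ and $x\wedge z\to z$; universality of the pullback is exactly the defining property of the greatest lower bound. The pullback of the morphism $x\to y$ along $z\to y$ is then the parallel morphism $x\wedge z\to z$. Thus ``$\cR$ is closed under pullbacks'' unwinds to: whenever $x\rR y$, $z\le y$, and the pullback $x\wedge z$ exists, the induced morphism $x\wedge z\to z$ lies in $\cR$, i.e.\ $(x\wedge z)\rR z$. This is verbatim the transfer-system restriction axiom, so combining (i) and (ii) gives both implications.

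The point requiring care is the identification in (ii): one must verify that a pullback in the poset category is computed by the meet, and, crucially, that the phrase ``closed under pullbacks'' ranges only over those cospans whose pullback actually exists, mirroring the ``$x\wedge z$ exists'' clause in the definition of a transfer system (in a general poset not every meet exists). Once this dictionary between pullback squares and meets is pinned down, the equivalence is immediate; there are no genuine calculations to grind through.
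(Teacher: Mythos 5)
Your proof is correct and follows essentially the same route as the paper's: identities and composition match reflexivity and transitivity, and pullbacks along a cospan $x\to y\leftarrow z$ are computed by the meet $x\wedge z$, so pullback-closure is verbatim the restriction axiom. Your additional observation that antisymmetry of $\rR$ comes for free from antisymmetry of $\le$ is a point the paper leaves implicit, and your care about the ``$x\wedge z$ exists'' clause matches the paper's ``the pullback, if it exists'' phrasing.
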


\begin{proof}
Reflexivity translates to containing the identity morphism for all objects, while transitivity translates to being closed under composition. Note that for a diagram 
\begin{center}
\begin{tikzcd}
{}                   & x \arrow[d] \\
z \arrow[r] & y               
\end{tikzcd}
\end{center}
in $\cP$, the pullback, if it exists, is given by $x\wedge z$. Thus, being closed under restriction translates precisely to being closed under pullbacks.
\end{proof}

\subsection{Weak factorization systems}\label{sec:wfs}

We now explore a surprising connection between transfer systems and weak factorization systems. A standard reference for weak factorization systems is \cite[\S14.1]{MayPonto}. Here we only recall the definitions and basic properties necessary to make the connection with transfer systems.

\begin{defn}\label{defn: lift property}
Let $\cC$ be a category and let $i\colon A \to B$ and $p\colon X \to Y$ be morphisms in $\cC$. If for every $f$ and $g$ that make the square
\begin{center}
\begin{tikzcd}
A \arrow[d, "i"'] \arrow[r, "g"]                       & X \arrow[d, "p"] \\
B \arrow[r, "f"'] \arrow[ru, "\exists\lambda", dotted] & Y               
\end{tikzcd}
\end{center}
commute, there exists a lift $\lambda$ such that the two triangles above commute, we say $i$ has the \emph{left lifting property} with respect to $p$, or equivalently, $p$ has the \emph{right lifting property} with respect to $i$.

\end{defn}

\begin{defn}
Let $\cM$ and $\cN$ be a classes of morphisms in $\cC$. We define
\[
{}^\boxslash \cM = \{i \mid i \text{ has the left lifting property with respect to all } p \in \cM \}
\]
and 
\[
\cM^\boxslash = \{p \mid \text{ has the right lifting property with respect to all } i \in \cM \}.
\]
Note that $\cM \subset {}^\boxslash \cN$ if and only if $\cN \subset \cM^\boxslash$; we write $\cM \boxslash \cN$ when this holds.
\end{defn}

\begin{defn}\label{defn: WFS}
A \emph{weak factorization system} in a category $\cC$ consists of a pair $(\cL,\cR)$ of classes of morphisms in $\cC$ such that 
\begin{enumerate}
    \item every morphism $f$ in $\cC$ can be factored as $f = pi$ with $i \in L$ and $p \in R$, and
    \item $\cL = {}^\boxslash \cR$ and $\cR = \cL^\boxslash.$
\end{enumerate}
\end{defn}

The collection of weak factorization systems on a category $\cC$ forms a poset under inclusion of the right set $\cR$, or equivalently, under reverse inclusion of the left set $\cL$.

 \begin{rem}
 Model categories are closely related to weak factorization systems. Indeed, as proved in \cite{JT}, a model category can be succinctly described as a bicomplete category $\cC$ together with three classes of morphisms, called cofibrations, fibrations and weak equivalences, such that
 \begin{itemize}
     \item (cofibrations, fibrations $\cap$ weak equivalences) and (cofibrations $\cap$ weak equivalences, fibrations) are weak factorization systems;
     \item weak equivalences satisfy the 2-out-of-3 property.
 \end{itemize}
 \end{rem} 
 
\begin{rem}\label{rem:WFS-dual}
 Recall that if $\cC$ is a category, the opposite category $\cC^\op$ is constructed by reversing the direction of morphisms in $\cC$. Note that $(\cL,\cR)$ is a weak factorization system on $\cC$ if and only if $(\cR^\op,\cL^\op)$ is a weak factorization system on $\cC^\op$.
\end{rem}

In order to relate weak factorization systems with transfer systems, we will need a helpful property of weak factorization systems as well as a re-characterization of weak factorization systems.  Proofs of these results can be found, for example, in \cite[\S 14.1]{MayPonto}.

\begin{prop}\label{prop: properties of WFS}
Let $(\cL,\cR)$ be a weak factorization system on a category $\cC$. Then $\cR$ contains all isomorphisms in $\cC$, and is closed under composition, pullbacks, and retracts, and dually for $\cL$.
\end{prop}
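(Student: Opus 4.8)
The plan is to derive every closure property purely from the lifting half of the definition---the identity $\cR = \cL^\boxslash$ from \cref{defn: WFS}---so that the factorization axiom plays no role at all. Concretely, for each claimed property I will begin with an arbitrary commuting square

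\begin{center}
\begin{tikzcd}
A \arrow[d, "i"'] \arrow[r, "g"] & X \arrow[d, "p"] \\
B \arrow[r, "f"'] & Y
\end{tikzcd}
\end{center}

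in which $i \in \cL$ and the right-hand map is built from morphisms known to lie in $\cR$, and I will exhibit a lift $\lambda\colon B\to X$; membership in $\cR = \cL^\boxslash$ is exactly the assertion that such a $\lambda$ always exists. Once all four properties hold for $\cR$, the dual statements for $\cL$ follow formally: by \cref{rem:WFS-dual} the pair $(\cR^\op,\cL^\op)$ is a weak factorization system on $\cC^\op$, so $\cL^\op$ has precisely the properties just proved for a right class, and reading them back in $\cC$ turns isomorphisms into isomorphisms, composites into composites, retracts into retracts, and pullbacks into pushouts.

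The two easy cases come first. If $p$ is an isomorphism, then $\lambda = p\inv f$ solves any lifting problem, since $p\lambda = f$ and $\lambda i = p\inv f i = p\inv p g = g$; hence every isomorphism lies in $\cL^\boxslash = \cR$. For composition, given $p\colon X\to Y$ and $q\colon Y\to Z$ in $\cR$ and a square against $qp$, I first lift against $q$ using the square with top edge $pg$ to get $\mu\colon B\to Y$ with $q\mu=f$ and $\mu i = pg$, then lift against $p$ using the square with bottom edge $\mu$ to get $\lambda\colon B\to X$ with $p\lambda=\mu$ and $\lambda i = g$; then $(qp)\lambda = q\mu = f$, so $qp\in\cR$.

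For retracts and pullbacks the lift is obtained by transporting a lift that already exists for $p$. In the retract case a retract diagram supplies sections and retractions $s,r$ on the source and $s',r'$ on the target with $rs=\mathrm{id}$ and $r's'=\mathrm{id}$; I solve the square transported along $s,s'$ to obtain $\mu$, and then $\lambda = r\mu$ works because the retraction identities collapse the relevant composites back to $f$ and $g$. The pullback case is the one requiring genuine care, and I expect it to be the main obstacle: given a pullback of $p\in\cR$ along some map, I push the square forward along the two pullback projections, solve the resulting lifting problem for $p$ to obtain $\mu\colon B\to X$, and then use the universal property of the pullback to assemble $\mu$ and $f$ into a candidate lift $\lambda$. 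The delicate point is verifying $\lambda i = g$: this is not immediate from the construction but follows from the \emph{uniqueness} clause of the universal property, since $\lambda i$ and $g$ have equal composites with both projections of the pullback. This reliance on uniqueness, rather than on a direct formula, is the only place where the argument is more than a routine diagram chase.
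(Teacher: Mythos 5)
Your proof is correct and is essentially the same argument as the paper's: the paper offers no proof of this proposition itself, deferring to \cite[\S 14.1]{MayPonto}, and the proof there is precisely the lifting-property computation you give, including your accurate observation that only the identity $\cR=\cL^\boxslash$ is needed (so these are really closure properties of any class defined by a right lifting property, with the factorization axiom playing no role). All four verifications are sound---isomorphisms via $\lambda=p^{-1}f$, composition via two successive lifts, retracts via $\lambda=r\mu$ using the arrow-category retract identities, and pullbacks via the uniqueness clause of the universal property to get $\lambda i=g$---and your dualization through \cref{rem:WFS-dual} correctly converts pullbacks into pushouts for $\cL$.
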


\begin{rem}\label{rem:WFS on poset}
In a poset $\cP$, the only isomorphisms are the identity maps, and the only retract of a morphism is itself. Thus, \cref{prop:transfer-pullbacks,prop: properties of WFS} imply that if $(\cL,\cR)$ is a weak factorization system in a lattice $\cP$, then $\cR$ is a transfer system on $\cP$.
\end{rem}

\begin{prop}\label{prop: new WFS}
Let $\cL$ and $\cR$ be a pair of classes of morphisms in $\cC$. Then $(\cL,\cR)$ is a weak factorization system on $\cC$ if and only if 
\begin{enumerate}
    \item every morphism $f$ in $\cC$ can be factored as $f = pi$ with $i \in \cL$ and $p \in \cR$, 
    \item $\cL\boxslash \cR$, and
    \item $\cL$ and $\cR$ are closed under retracts.
\end{enumerate}
\end{prop}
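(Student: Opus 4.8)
The plan is to prove the two implications separately, with the forward direction amounting to a repackaging of \cref{defn: WFS} together with \cref{prop: properties of WFS}, and the backward direction resting on the classical \emph{retract argument}. So the only real content lies in one direction.

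For the forward implication, suppose $(\cL,\cR)$ is a weak factorization system in the sense of \cref{defn: WFS}. Condition (1) is then immediate. For condition (2), the equality $\cL = {}^\boxslash \cR$ gives in particular $\cL \subseteq {}^\boxslash \cR$, which is exactly $\cL\boxslash\cR$. Condition (3) is precisely the closure-under-retracts statement already recorded in \cref{prop: properties of WFS}. Hence this direction requires no new work.

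The substance is the backward implication. Assuming (1)--(3), I must upgrade the single inclusion coming from $\cL\boxslash\cR$ (namely $\cL \subseteq {}^\boxslash \cR$, equivalently $\cR \subseteq \cL^\boxslash$) to the two equalities $\cL = {}^\boxslash\cR$ and $\cR = \cL^\boxslash$. It therefore suffices to establish the reverse inclusions ${}^\boxslash\cR \subseteq \cL$ and $\cL^\boxslash \subseteq \cR$. I will carry out $\cL^\boxslash \subseteq \cR$ in detail; the other follows by the symmetric construction, or formally by passing to $\cC^\op$ and invoking \cref{rem:WFS-dual}. Given $p\colon X \to Y$ in $\cL^\boxslash$, I use the factorization (1) to write $p = qi$ with $i\colon X \to Z$ in $\cL$ and $q\colon Z \to Y$ in $\cR$. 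The key step is to realize $p$ as a retract of $q$: forming the commuting square with left edge $i$, right edge $p$, top edge $\mathrm{id}_X$, and bottom edge $q$, the hypothesis $p \in \cL^\boxslash$ supplies a lift $r\colon Z \to X$ with $ri = \mathrm{id}_X$ and $pr = q$. These two identities say exactly that $(i,\mathrm{id}_Y)\colon p \to q$ and $(r,\mathrm{id}_Y)\colon q \to p$ are morphisms in the arrow category of $\cC$ whose composite is $\mathrm{id}_p$, so $p$ is a retract of $q$. Since $q \in \cR$ and $\cR$ is closed under retracts by (3), I conclude $p \in \cR$.

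The main (and essentially only) obstacle is bookkeeping: one must orient the lifting square so that the resulting lift assembles into a genuine retract diagram, and then check that the two arrow-category morphisms compose to the identity on $p$. There is no deeper difficulty once the retract argument is set up. The dual inclusion ${}^\boxslash\cR \subseteq \cL$ runs symmetrically: for $j \in {}^\boxslash\cR$ with factorization $j = qi$, the lifting property of $j$ against $q \in \cR$ exhibits $j$ as a retract of the left factor $i \in \cL$, and closure of $\cL$ under retracts gives $j \in \cL$.
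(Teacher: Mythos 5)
Your proof is correct: the forward direction is immediate from \cref{defn: WFS} together with \cref{prop: properties of WFS}, and your converse is the classical retract argument, correctly set up---the lifting square with top edge $\mathrm{id}_X$ against the right factor $q$ exhibits $p\in\cL^\boxslash$ as a retract of $q\in\cR$, and dually for ${}^\boxslash\cR\subseteq\cL$. The paper itself offers no proof, deferring to \cite[\S 14.1]{MayPonto}, and your argument coincides with the standard one found there.
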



We use this result to construct weak factorization systems from transfer systems on a poset.


\begin{prop}\label{prop: TS to WFS}
Let $\cP$ be a finite lattice and let $\cR$ be a transfer system on $\cP$. Then there is a unique weak factorization system $(\cL,\cR)$ on $\cP$.
\end{prop}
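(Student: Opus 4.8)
The plan is to produce the required weak factorization system by taking the left class to be $\cL={}^\boxslash\cR$, and then to verify the three conditions of \cref{prop: new WFS} for the pair $({}^\boxslash\cR,\cR)$. Two of these conditions are immediate. Condition (2), $\cL\boxslash\cR$, holds by the very definition of ${}^\boxslash\cR$. Condition (3), closure under retracts, is automatic because in a poset the only retract of a morphism is itself, as recorded in \cref{rem:WFS on poset}. Thus the entire content is condition (1), the factorization axiom, which is where I expect the main difficulty to lie.

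To factor a morphism $f\colon x\to y$ (that is, a relation $x\le y$), I would take as intermediate object the meet
\[
  w \;=\; \bigwedge\{\, z\in\cP \mid x\le z\le y \text{ and } z\rR y \,\},
\]
which exists since $\cP$ is a finite lattice. The first step is to check that $w$ itself lies in this indexing set, so that $w\rR y$; this amounts to showing the set is closed under binary meets. Given $z_1\rR y$ and $z_2\rR y$ with $x\le z_i\le y$, the restriction axiom applied to $z_1\rR y$ and $z_2\le y$ yields $(z_1\wedge z_2)\rR z_2$, and transitivity with $z_2\rR y$ gives $(z_1\wedge z_2)\rR y$. As the set also contains $y$ by reflexivity, it is nonempty, $w$ is well defined with $x\le w\le y$, and $f$ factors as $(w\to y)\circ(x\to w)$ with $w\to y\in\cR$.

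It remains to show the left factor $x\to w$ lies in ${}^\boxslash\cR$, and here the minimality built into $w$ is exactly what is needed. Unwinding the lifting property in a poset, $x\to w\in{}^\boxslash\cR$ precisely when, for every $c\rR d$ with $x\le c$ and $w\le d$, one has $w\le c$. Given such $c$ and $d$, I would apply the restriction axiom to $c\rR d$ and $w\le d$ to get $(c\wedge w)\rR w$, then compose with $w\rR y$ to obtain $(c\wedge w)\rR y$. Since $x\le c\wedge w\le y$, the element $c\wedge w$ belongs to the set defining $w$, so $w\le c\wedge w\le c$, as required. This establishes condition (1), and \cref{prop: new WFS} then gives that $({}^\boxslash\cR,\cR)$ is a weak factorization system. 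Uniqueness is immediate: by \cref{defn: WFS} any weak factorization system with right class $\cR$ must have left class ${}^\boxslash\cR$.

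The main obstacle is the factorization step, and specifically the insight that the correct intermediate object is the meet of all $\cR$-predecessors of $y$ lying above $x$: the transfer-system axioms force this set to be meet-closed, hence to possess a least element, and the defining minimality of that element is precisely the left lifting property of $x\to w$.
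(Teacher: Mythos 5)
Your proposal is correct, and its factorization step takes a genuinely different route from the paper's. Both arguments frame things identically at the outset: set $\cL={}^\boxslash\cR$, note that $\cL\boxslash\cR$ and retract-closure are automatic, and reduce via \cref{prop: new WFS} to the factorization axiom, with uniqueness forced by \cref{defn: WFS}. From there the paper argues iteratively: if $x\to y\notin{}^\boxslash\cR$, an obstruction square against some $z\to w\in\cR$ gives $y\wedge z<y$ with $y\wedge z\to y\in\cR$ by restriction, and one repeats with $x\to y\wedge z$; finiteness of $\cP$ forces termination, the loop exiting precisely when the left factor lands in $\cL$ (so the lifting property of the left factor is never verified directly), and the right factor is a composite of $\cR$-morphisms, hence in $\cR$ by transitivity. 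You instead compute the limit of this descent in closed form: your $w$ is the least element of $S=\{z \mid x\le z\le y,\ z \rR y\}$, which exists because $S$ is finite, contains $y$, and is closed under binary meets --- a fact you correctly extract from restriction plus transitivity --- and your direct verification that $x\to w$ has the left lifting property (via $(c\wedge w)\rR w$, then $(c\wedge w)\rR y$, then minimality of $w$) is sound. What your approach buys is a canonical one-step factorization with no case split and an explicit description of the factorization of every morphism; the paper's version is lighter on lattice bookkeeping per step but leans on a termination argument. The two uses of finiteness are parallel --- yours to conclude $\bigwedge S\in S$ from binary meet-closure, the paper's to halt the descent --- and your construction makes visible the hypothesis needed for infinite lattices (closure of $S$ under arbitrary meets), matching the paper's remark that one should then require $\cR$ to be closed under transfinite composition.
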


\begin{proof}
For a weak factorization system we need $\cL = {}^\boxslash \cR,$ so $\cL$ is uniquely determined by $\cR$. Since $\cP$ is a poset, the only retract of a morphism $x \to y$ is itself. Thus, by \cref{prop: new WFS}, it suffices to show that every morphism can be factored, since $\cL = {}^\boxslash \cR$ implies $\cL \boxslash \cR$.

Let $x \to y$ be a morphism in $\cP$. If $x \to y$ is in $\cL$ we are done because we can factor with the identity. Suppose then that $x \to y$ is not in $\cL$. This means that there exists a commutative diagram 
\begin{center}
    \begin{tikzcd}
x \arrow[d] \arrow[r] & z \arrow[d] \\
y \arrow[r]           & w          
    \end{tikzcd}
\end{center}
with $z\to w \in \cR$ that does not admit a lift, meaning that $y\not\leq z$, and hence $y\wedge z < y$.

Then, since $y \leq w$ and $\cR$ is closed under restriction, we get that $y \wedge z \to y$ is in $\cR$. If $x \to y \wedge z$ is in $\cL$, we found a factorization of $x\to y$. If not, we can repeat this step, and since $\cP$ is finite, this process must terminate eventually.
\end{proof}

\begin{rem}
The proof makes it clear that one may reformulate \cref{prop: TS to WFS} so that it applies to infinite lattices if we add the condition that $\cR$ is closed under transfinite composition.
\end{rem}
 
\cref{rem:WFS on poset} and \cref{prop: TS to WFS} combine to give the following result.

\begin{thm}\label{thm: 1-1 WFS and TS}
Let $\cP$ be a finite lattice. Then
\[\cR \longleftrightarrow ({}^\boxslash \cR,\cR)\]
gives an isomorphism between the poset of transfer systems on $\cP$ and the poset of weak factorization systems on $\cP$.\hfill\qedsymbol
\end{thm}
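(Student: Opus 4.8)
The plan is to assemble the two preceding results into a bijection and then check that it is a morphism of posets in both directions. Define the forward map $\cR \mapsto ({}^\boxslash\cR,\cR)$ on transfer systems and the backward map $(\cL,\cR)\mapsto\cR$ on weak factorization systems. By \cref{prop: TS to WFS}, the forward map sends a transfer system on $\cP$ to a genuine weak factorization system (whose left class is necessarily ${}^\boxslash\cR$), so it is well-defined; by \cref{rem:WFS on poset}, the backward map sends a weak factorization system to a transfer system, so it too is well-defined.

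First I would verify that the two maps are mutually inverse. The composite $\cR \mapsto ({}^\boxslash\cR,\cR)\mapsto\cR$ is visibly the identity on transfer systems. For the other composite, starting from a weak factorization system $(\cL,\cR)$ we produce $\cR$ and then $({}^\boxslash\cR,\cR)$; since axiom (2) of \cref{defn: WFS} forces $\cL={}^\boxslash\cR$, we recover $(\cL,\cR)$ exactly. Equivalently, this is the uniqueness clause of \cref{prop: TS to WFS}. Hence the correspondence is a bijection between transfer systems on $\cP$ and weak factorization systems on $\cP$.

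Second I would check order-preservation. Refinement of transfer systems is, by definition, containment of relations, i.e.\ inclusion of the underlying sets of morphisms; and the poset of weak factorization systems is ordered by inclusion of the right class $\cR$. Under the bijection these two orderings agree on the nose: $\cR_1\subseteq\cR_2$ if and only if $({}^\boxslash\cR_1,\cR_1)\le({}^\boxslash\cR_2,\cR_2)$. Thus the forward map and its inverse are both monotone, and the bijection is an isomorphism of posets.

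I do not expect a serious obstacle here, since all the substantive content lives in \cref{prop: TS to WFS,rem:WFS on poset}. The one point that requires care is matching the two order conventions: a weak factorization system is determined by its right class and ordered accordingly, while a transfer system is ordered by refinement. I would therefore make explicit that both orderings amount to inclusion of $\cR$, so that the bijection is order-preserving in both directions and the conclusion follows.
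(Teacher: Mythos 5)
Your proposal is correct and takes essentially the same approach as the paper, which states \cref{thm: 1-1 WFS and TS} with no separate proof precisely because it follows by combining \cref{rem:WFS on poset} and \cref{prop: TS to WFS} --- exactly the assembly you perform. Your explicit checks of the mutual-inverse property (via the axiom $\cL = {}^\boxslash\cR$ in \cref{defn: WFS}) and of order-preservation (noting that both refinement of transfer systems and the ordering of weak factorization systems amount to inclusion of the right class) simply spell out what the paper leaves implicit.
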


We conclude this section with an explicit description of the collection ${}^\boxslash \cR$ for a transfer system $\cR$ on $\cP$.

\begin{defn}\label{defn:downward}
Let $\cP$ be a poset. For a transfer system $\cR \in \trans(\cP)$, define the \emph{downward extension} of $\cR$ to be
\[
\DE(\cR) = \{z\to y \mid \text{there exists } x \in \cP \text{ such that } z\leq x < y \text{ and } x \to y \in \cR\}.
\] 
\end{defn}

\begin{figure}
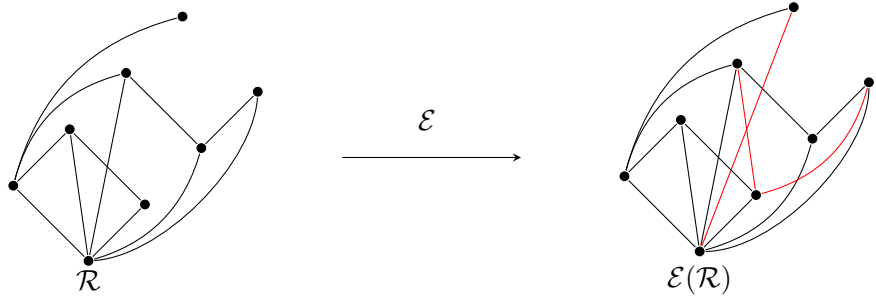

    \centering
    \ctikzfig{transfexample2}
    \caption{A transfer system $\cR$ and the corresponding $\DE(\cR)$ with added edges in red.}
    \label{fig:downward}
\end{figure}

\begin{prop}\label{prop:WFS-downward}
Let $\cR$ be a transfer system on a finite lattice $\cP$. Then 
\[{}^\boxslash \cR=\DE(\cR)^c,\]
where $(-)^c$ denotes the complement of the collection of morphisms. 
\end{prop}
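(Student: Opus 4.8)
The plan is to unwind the left lifting property into an elementary order-theoretic condition and then establish the two inclusions ${}^\boxslash\cR \subseteq \DE(\cR)^c$ and $\DE(\cR)^c \subseteq {}^\boxslash\cR$ separately. First I would record what the lifting property means in a poset category: since there is at most one morphism between any two objects, every square of the relevant shape automatically commutes and the triangle identities demanded of a lift are vacuous. Consequently a morphism $z\to y$ lies in ${}^\boxslash\cR$ precisely when, for every $c\to d$ in $\cR$ with $z\leq c$ and $y\leq d$, one has $y\leq c$ (so that the lift $y\to c$ exists). This reformulation reduces the whole statement to comparing two explicit sub-collections of the relation $\leq$.

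For the inclusion ${}^\boxslash\cR \subseteq \DE(\cR)^c$, I would argue the contrapositive. If $z\to y\in\DE(\cR)$, choose $x$ with $z\leq x< y$ and $x\to y\in\cR$. Then the square with top map $z\to x$, right map $x\to y\in\cR$, left map $z\to y$, and bottom map the identity $y\to y$ commutes but admits no lift: a lift $y\to x$ would force $y\leq x$, contradicting $x<y$. Hence $z\to y$ already fails to lift against this one member of $\cR$, so $z\to y\notin{}^\boxslash\cR$.

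For the reverse inclusion $\DE(\cR)^c\subseteq{}^\boxslash\cR$, I would take $z\to y\notin\DE(\cR)$ and verify the reformulated lifting condition. Given any $c\to d\in\cR$ with $z\leq c$ and $y\leq d$, the key move is to restrict $c\to d$ along $y\leq d$: because $\cR$ is a transfer system, closure under restriction (equivalently, under pullbacks, by \cref{prop:transfer-pullbacks}) yields $(c\wedge y)\to y\in\cR$, where the meet exists since $\cP$ is a lattice. Since $z\leq c$ and $z\leq y$, we have $z\leq c\wedge y\leq y$, so $c\wedge y$ is a candidate witness for membership of $z\to y$ in $\DE(\cR)$. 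As $z\to y\notin\DE(\cR)$, the only possibility is $c\wedge y=y$, i.e.\ $y\leq c$, which is exactly the lift we required.

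I expect the main obstacle to be recognizing that the restriction (pullback) of $c\to d$ along $y\leq d$ is the right tool, and then observing that the strict inequality $x<y$ in the definition of $\DE(\cR)$ is precisely what forces the meet $c\wedge y$ to collapse onto $y$. The finiteness and lattice hypotheses on $\cP$ enter only to guarantee that this meet exists; unlike the proof of \cref{prop: TS to WFS}, no factorization or termination argument is needed, so the result follows entirely from the two implications above.
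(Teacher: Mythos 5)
Your proposal is correct and follows essentially the same route as the paper's proof: the same identity-target square ($z\to x$, $x\to y\in\cR$, $z\to y$, $\mathrm{id}_y$) shows that morphisms in $\DE(\cR)$ fail to lift, and the same restriction of $c\to d$ along $y\leq d$ produces the meet $c\wedge y\in[z,y]$, which the hypothesis $z\to y\notin\DE(\cR)$ forces to equal $y$, yielding the lift. Your closing observation is also apt, and in fact slightly understates the point: finiteness of $\cP$ plays no role in this proposition at all — the lattice hypothesis alone supplies the needed meet.
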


\begin{proof}
We will begin by showing that $\DE(\cR)^c \subseteq {}^\boxslash \cR.$ Let $a \to b \in \DE(R)^c$. Then we need to show that given a commutative diagram
\begin{figure}[H]
    \centering
    \begin{tikzcd}
    a \arrow[r] \arrow[d]          & x \arrow[d] \\
    b \arrow[r] \arrow[ru, dotted] & y          
    \end{tikzcd}
\end{figure}
with $x \to y \in \cR$, there exists a lift. Since we are working with a category coming from a poset, to satisfy the lifting property it will suffice to show that $b \leq x$ in $\cP$.  Note that $b\leq y$, so by restriction we have that $x \wedge b \to b \in \cR$. Moreover, $a \leq x \wedge b$ as $x \wedge b$ is the pullback of $b\to y \leftarrow x$. Thus, the assumption that $a\to b \notin \DE(\cR)$ implies that $x\wedge b = b$, or equivalently, that $b \leq x$ as desired.

For the other inclusion, we will proceed by proving the contrapositive. To that end, suppose that $a \to b \in \DE(\cR)$. Then there exists $x\in \cP$ such that $x\neq b$, $a\leq x$ and $x\to b \in \cR$. Consider the commutative diagram
\begin{figure}[H]
    \centering
\begin{tikzcd}
a \arrow[r] \arrow[d]          & x \arrow[d] \\
b \arrow[r] \arrow[ru, dotted] & b          
\end{tikzcd}
\end{figure}
in $\cP$. Given that $x<b$, there exists no lift, and hence $a\to b$ is not in ${}^\boxslash \cR$, as wanted.
\end{proof}

\subsection{Self-duality}\label{sec:self-dual}
In this section we prove the main result of this paper, namely, that if $\cP$ is self-dual, so is its lattice of transfer systems $\trans(\cP)$.

\begin{defn}\label{defn:self-dual} 
 Let $\cP$ be a poset. We say $\cP$ is \emph{self-dual} if there exists a bijection
 \[ \nabla \colon \cP \to \cP\] 
 such that $x\leq y$ if and only if $y^\nabla \leq x^\nabla$. We call $\nabla$ a \emph{duality} for $\cP$, and write $x^\nabla$ instead of $\nabla(x)$.
\end{defn}

 Note that we can consider $\nabla$ as an isomorphism of categories $\cP^\op \to \cP$. There is no condition on $\nabla$ being an involution,\footnote{Moreover, there are examples of self-dual posets for which there is no order-reversing involution, see \cite[Chapter 3, Exercise 3]{Stanley}.} although it will be in the examples of interest to us.

\begin{ex}\label{ex:grid}
For a natural number $n$, the poset $[n]$ is self-dual by mapping $i$ to $n-i$. This duality extends to the product poset $[n_1]\times \dots \times [n_k]$. 
\end{ex}

\begin{ex}\label{ex:Boolean}
 Recall the Boolean lattice $\cB_n$ of \cref{ex:posets} \eqref{Boolean}. The function that sends a subset to its complement is an order-reversing involution, and hence a duality for $\cB_n$. Note that under the isomorphism $\cB_n \cong [1]^n$, this duality matches with the one in \cref{ex:grid}.
\end{ex}

\begin{ex}\label{ex:divisor}
 The poset $\cD_n$ of positive divisors of $n$ is self-dual by mapping $k$ to $n/k$. Recall from \cref{ex:posets} \eqref{divisors} that if $n=p_1^{a_1}\dots p_j^{a_k}$ is the prime decomposition of $n$, then $\cD_n$ is isomorphic to $[a_1]\times \dots \times [a_k]$. Under this isomorphism, the duality of $\cD_n$ coincides with the one of \cref{ex:grid}.
\end{ex}

\begin{ex}\label{ex:abeliansub}
As noted in \cite[Theorem 8.1.4]{schmidt}, the lattice of subgroups $\sub(G)$ is self-dual for every finite Abelian group $G$. The bijection $\nabla$ is constructed using a (non-canonical) isomorphism between $G$ and $G^*=\Hom(G,\Co^\times)$. In the case that $G=C_n$, there is an explicit order-reversing involution given by $C_k \mapsto C_{n/k}$, which coincides with the one in \cref{ex:divisor} via the identification of $\sub(C_n)$ with $\cD_n$.
\end{ex}

\begin{thm}\label{thm:cat-involution}
 If $(\cP,\nabla)$ is a self-dual lattice, then $\trans(\cP)$ is self-dual, with duality
 \[\phi \colon \trans(\cP) \to \trans(\cP)\]
 given by 
 \[\phi(\cR)=(({}^\boxslash\cR)^\op)^\nabla.\]
 Moreover, if $\nabla$ is an involution so is $\phi$.
\end{thm}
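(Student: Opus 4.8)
The plan is to realize $\phi$ as a composite of three maps, each of which I understand individually, and then read off that the composite is an order-reversing bijection. The three ingredients are: (i) the passage $\cR \mapsto {}^\boxslash\cR$ from a transfer system to the left class of its associated weak factorization system, which by \cref{thm: 1-1 WFS and TS} is a bijection and which is order-reversing because ${}^\boxslash(-)$ is antitone for inclusion of morphism classes (having the left lifting property with respect to a larger right class is a stronger condition, so the left class shrinks); (ii) the opposite-category operation $(-)^\op$, which by \cref{rem:WFS-dual} carries the weak factorization system $({}^\boxslash\cR,\cR)$ on $\cP$ to the weak factorization system $(\cR^\op,({}^\boxslash\cR)^\op)$ on $\cP^\op$, and which is a monotone bijection on morphism classes; and (iii) transport along the category isomorphism $\nabla\colon\cP^\op\to\cP$, which carries a weak factorization system on $\cP^\op$ to one on $\cP$ and is again a monotone bijection.

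First I would check that $\phi$ is well-defined, i.e.\ that $\phi(\cR)$ is a transfer system. Chasing $\cR$ through (i)--(iii): $({}^\boxslash\cR,\cR)$ is a weak factorization system on $\cP$ by \cref{thm: 1-1 WFS and TS}; applying $(-)^\op$ gives the weak factorization system $(\cR^\op,({}^\boxslash\cR)^\op)$ on $\cP^\op$ by \cref{rem:WFS-dual}; and transporting along $\nabla$ gives a weak factorization system on $\cP$ whose right class is exactly $\phi(\cR)=(({}^\boxslash\cR)^\op)^\nabla$. By \cref{rem:WFS on poset} the right class of a weak factorization system on a finite lattice is a transfer system, so $\phi(\cR)\in\trans(\cP)$. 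Since each of (i)--(iii) is a bijection between the relevant posets of transfer systems and weak factorization systems, $\phi$ is a bijection; and since (i) is antitone while (ii) and (iii) are monotone, $\phi$ is order-reversing. Concretely, $\cR_1\subseteq\cR_2$ implies ${}^\boxslash\cR_2\subseteq{}^\boxslash\cR_1$, hence $({}^\boxslash\cR_2)^\op\subseteq({}^\boxslash\cR_1)^\op$, hence $\phi(\cR_2)\subseteq\phi(\cR_1)$. This already establishes that $\phi$ is a self-duality.

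For the involution statement I would introduce the auxiliary operation $\tau(\cM)=(\cM^\op)^\nabla$ on classes of morphisms of $\cP$, so that $\phi(\cR)=\tau({}^\boxslash\cR)$. When $\nabla$ is an involution, tracking a single morphism shows that $(a,b)\in\tau(\cM)$ if and only if $(b^\nabla,a^\nabla)\in\cM$, and hence $\tau^2=\mathrm{id}$. The crucial intermediate identity is that the weak factorization system produced in the well-definedness step has left class $(\cR^\op)^\nabla=\tau(\cR)$; since the left class of any weak factorization system on $\cP$ must equal ${}^\boxslash$ of its right class, this forces ${}^\boxslash\phi(\cR)=\tau(\cR)$. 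Then $\phi^2(\cR)=\tau({}^\boxslash\phi(\cR))=\tau(\tau(\cR))=\cR$, as desired.

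The main obstacle I anticipate is the bookkeeping of how $(-)^\op$ and $(-)^\nabla$ act on morphism classes and, in particular, verifying the identity ${}^\boxslash\phi(\cR)=\tau(\cR)$ cleanly: this is where I must use that transport of a weak factorization system along an isomorphism is again a weak factorization system — so that its left class is automatically ${}^\boxslash$ of its right class — rather than attempting to compute ${}^\boxslash\phi(\cR)$ directly from the lifting definition. Keeping straight which maps go between $\cP$ and $\cP^\op$, and invoking the involutivity of both $(-)^\op$ and $\nabla$ at the right moments, is the part most prone to error.
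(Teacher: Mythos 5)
Your proof is correct, and its skeleton---realizing $\phi$ as the composite of $\cR\mapsto({}^\boxslash\cR,\cR)$ via \cref{thm: 1-1 WFS and TS}, passage to the opposite category via \cref{rem:WFS-dual}, and transport along the isomorphism $\nabla$, with well-definedness then following from \cref{rem:WFS on poset}---is exactly the paper's. The two arguments diverge only in the final third. The paper establishes bijectivity for arbitrary (possibly non-involutive) $\nabla$ by exhibiting an explicit inverse: using the identity $({}^\boxslash\cM)^\op=(\cM^\op)^\boxslash$ it rewrites $\phi(\cR)=((\cR^{\nabla^\op})^\op)^\boxslash$, and then $({}^\boxslash\cR)^\boxslash=\cR$ (from \cref{prop: TS to WFS}) yields $\phi^{-1}(\cR)=(({}^\boxslash\cR)^\op)^\Delta$ where $\Delta^{-1}=\nabla^\op$; involutivity of $\phi$ when $\nabla^2=\mathrm{id}$ then drops out because $\Delta=\nabla$ in that case. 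You instead obtain bijectivity by composing three bijections, and obtain involutivity from the identity ${}^\boxslash\phi(\cR)=(\cR^\op)^\nabla=\tau(\cR)$, which you correctly extract from the transported weak factorization system (its left class is forced to be ${}^\boxslash$ of its right class by axiom (2) of \cref{defn: WFS}), giving $\phi^2(\cR)=\tau(\tau(\cR))=\cR$. Your finish is slightly more economical for the involutive case, which is the case of interest for the applications; the paper's buys an explicit closed formula for $\phi^{-1}$ under a general duality. One small point to tighten: a self-duality in the sense of \cref{defn:self-dual} requires the biconditional $\cR_1\subseteq\cR_2$ if and only if $\phi(\cR_2)\subseteq\phi(\cR_1)$, and you state only the forward implication; in your framework the converse is immediate, either because the inverse of each of your three ingredients is again antitone (resp.\ monotone)---e.g.\ the inverse of $\cR\mapsto{}^\boxslash\cR$ is $\cL\mapsto\cL^\boxslash$---or, as the paper notes directly, because $\cR\subseteq\cR'$ holds if and only if ${}^\boxslash\cR'\subseteq{}^\boxslash\cR$, since each class of a weak factorization system determines the other.
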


\begin{proof}
Given a transfer system $\cR$ on $\cP$, we consider the weak factorization system $({}^\boxslash \cR, \cR)$ on $\cP$. By \cref{rem:WFS-dual},  $(\cR^\op, ({}^\boxslash\cR)^\op)$ is a weak factorization system on $\cP^\op$. Since $\nabla$ is an isomorphism, it takes this to a weak factorization system $((\cR^\op)^\nabla, (({}^\boxslash\cR)^\op)^\nabla)$ on $\cP$. Thus, by \cref{rem:WFS on poset} the collection $(({}^\boxslash\cR)^\op)^\nabla$ is a transfer system on $\cP$.

If $\cR'$ is another transfer system on $\cP$, we have that $\cR \subseteq \cR'$ if and only if ${}^\boxslash \cR' \subseteq {}^\boxslash \cR$. This shows that $\cR \subseteq \cR'$ if and only if $\phi(\cR')\subseteq \phi(\cR)$, since $(-)^\op$ and $(-)^\nabla$ preserve and reflect the containment relation, respectively. 

To prove that $\phi$ is a bijection, let $\Delta\colon \cP^\op\to \cP$ denote the opposite of the inverse of $\nabla$, that is, $\Delta ^{-1}=\nabla^\op$. Using that $({}^\boxslash \cM)^\op = (\cM^\op)^\boxslash$ and that $\nabla$ is an isomorphism, we get that $\phi$ can also be expressed as
\[\phi(\cR)=((\cR^{\nabla^\op})^\op)^\boxslash.\]
Thus, the fact that $({}^\boxslash \cR)^\boxslash = \cR$ (see \cref{prop: TS to WFS}) implies that $\phi$ is a bijection with inverse
\[\phi^{-1}(\cR)=(({}^\boxslash \cR)^\op)^\Delta.\]
In particular, if $\nabla$ is an involution, so is $\phi$.

\end{proof}

As a consequence of \cref{prop:WFS-downward}, we obtain an explicit description of the involution $\phi$. See \cref{duality} for an example of this result.

\begin{cor}\label{cor:phi-explicit}
Let $(\cP,\nabla)$ be a self-dual lattice. Then the involution $\phi$ satisfies that
\[\phi (\cR)=((\DE(\cR)^\op)^\nabla)^c.\]
\hfill\qedsymbol
\end{cor}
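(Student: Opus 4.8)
The goal is to prove \cref{cor:phi-explicit}, giving the explicit formula
\[\phi(\cR)=((\DE(\cR)^\op)^\nabla)^c.\]
The plan is to simply unwind the formula for $\phi$ from \cref{thm:cat-involution} and substitute the explicit description of ${}^\boxslash\cR$ provided by \cref{prop:WFS-downward}. Recall that \cref{thm:cat-involution} defines $\phi(\cR)=(({}^\boxslash\cR)^\op)^\nabla$, and \cref{prop:WFS-downward} tells us that ${}^\boxslash\cR=\DE(\cR)^c$ whenever $\cP$ is a finite lattice. So the heart of the argument is the chain of substitutions
\[
\phi(\cR)=(({}^\boxslash\cR)^\op)^\nabla=((\DE(\cR)^c)^\op)^\nabla.
\]
It then remains to verify that the operations $(-)^c$, $(-)^\op$, and $(-)^\nabla$ may be reordered so that this equals $((\DE(\cR)^\op)^\nabla)^c$.

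The key step, and the only thing requiring genuine verification, is that complementation commutes appropriately with the dualization operations $(-)^\op$ and $(-)^\nabla$. Here I would note that each of these three operations is a bijection on the set of all morphisms of $\cP$ (viewing a collection of morphisms as a subset of $\mathrm{Mor}(\cP)$): the operation $(-)^\op$ sends the morphism $x\to y$ to the morphism $y\to x$ in $\cP^\op$, the operation $(-)^\nabla$ sends $x\to y$ to $y^\nabla\to x^\nabla$, and complementation $(-)^c$ is complementation within $\mathrm{Mor}(\cP)$. Since $(-)^\op$ and $(-)^\nabla$ are induced by bijections on the underlying set of morphisms, they commute with taking complements: for any collection $\cM$ of morphisms, $(\cM^c)^\op=(\cM^\op)^c$ and $(\cM^c)^\nabla=(\cM^\nabla)^c$. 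Applying these two identities in turn moves the complement to the outside:
\[
((\DE(\cR)^c)^\op)^\nabla=((\DE(\cR)^\op)^c)^\nabla=((\DE(\cR)^\op)^\nabla)^c,
\]
which is exactly the claimed formula.

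I do not anticipate any serious obstacle, since this corollary is purely a matter of composing established results. The one point that deserves care is keeping track of \emph{which} set the complement is taken in at each stage: after applying $(-)^\op$ we are working with morphisms in $\cP^\op$, and after applying $(-)^\nabla$ we are back in $\cP$, so I would confirm that $\nabla$ (as an isomorphism $\cP^\op\to\cP$) carries $\mathrm{Mor}(\cP^\op)$ bijectively onto $\mathrm{Mor}(\cP)$, ensuring the complements line up. Given that the proof is so short, I would likely present it as the displayed chain of equalities above together with a single sentence justifying that the bijections $(-)^\op$ and $(-)^\nabla$ commute with complementation.
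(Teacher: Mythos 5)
Your proof is correct and matches the paper's (implicit) argument exactly: the corollary is stated with a qed symbol precisely because it follows by substituting the identity ${}^\boxslash\cR=\DE(\cR)^c$ of \cref{prop:WFS-downward} into the formula $\phi(\cR)=(({}^\boxslash\cR)^\op)^\nabla$ of \cref{thm:cat-involution}, and then commuting the complement past $(-)^\op$ and $(-)^\nabla$, which is valid since both are induced by bijections on morphism sets. Your attention to which ambient set of morphisms the complement is taken in is exactly the right point of care, and nothing further is needed.
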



\subsection{Slats --- numerical symmetry of the duality}\label{sec:slats}
We expect the existence of the duality $\phi$ to aid in proving enumeration results, as was done in \cite{BBPR} to count the number of transfer systems for $[1]\times [1] \times [1]$. In fact, we first suspected the existence of an involution when attempting to enumerate the transfer systems for $[n]\times [1]$ and noticed there was a symmetry in the results when we restricted to counting transfer systems with a given number of ``slats'', as we now explain. Throughout this section we fix $n\geq 1$.

\begin{defn}\label{defn:slats}
Let $0\leq k \leq n$. The $k$th \emph{slat} in the poset $[n]\times[1]$ is $(k,0) \le (k,1)$. If $\cR$ is a transfer system on $[n]\times [1]$, by the restriction property, if the $k$th slat is in $\cR$, so is the $i$th slat for all $0\leq i<k$. We say that the \emph{top slat} in a transfer system $\rR$ on $[n]\times [1]$ is the slat with the largest $k$ such that $(k,0)\rR (k,1)$. For $0\leq k \leq n$, we let $S_k$  denote the set of all transfer systems for which the $k$th slat is the top slat. We let $S_{-1}$ be the set of transfer systems with no slats. 
\end{defn}

Note that we can alternatively describe $S_k$ as the collection of transfer systems with $k+1$ slats.

Recall the involution $\nabla$ on $[n]\times [1]$ of \cref{ex:grid}. It sends $(i,j)$ to $(n-i,1-j)$.

\begin{prop}
 The involution $\phi$ of \cref{thm:cat-involution} exchanges the sets $S_k$ and $S_{n-k-1}$.
\end{prop}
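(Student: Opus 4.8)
The plan is to use the explicit formula $\phi(\cR)=((\DE(\cR)^\op)^\nabla)^c$ from \cref{cor:phi-explicit} and simply read off which slats lie in $\phi(\cR)$. Since $\nabla$ is an involution on $[n]\times[1]$ (\cref{ex:grid}), \cref{thm:cat-involution} guarantees that $\phi$ is an involution; moreover the sets $S_{-1},S_0,\dots,S_n$ partition $\trans([n]\times[1])$ according to the top slat. Hence it suffices to prove the one-sided containment $\phi(S_k)\subseteq S_{n-k-1}$ for every $k$: applying $\phi$ again and using involutivity then upgrades this into the claimed bijection $\phi\colon S_k \leftrightarrow S_{n-k-1}$.

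First I would determine how the composite $((-)^\op)^\nabla$ acts on an individual morphism. Unwinding the definitions, $(-)^\op$ sends a morphism $a\to b$ of $\cP$ (so $a\le b$) to the morphism $b\to a$ of $\cP^\op$, and the functor $\nabla\colon \cP^\op\to\cP$ then sends this to $b^\nabla\to a^\nabla$. Applied to the $k$th slat $(k,0)\to(k,1)$ with $\nabla(i,j)=(n-i,1-j)$, this yields $(k,1)^\nabla\to(k,0)^\nabla=(n-k,0)\to(n-k,1)$, which is exactly the $(n-k)$th slat. Thus the composite $((-)^\op)^\nabla$ permutes slats by $k\mapsto n-k$; equivalently, the $m$th slat lies in $(\DE(\cR)^\op)^\nabla$ if and only if the $(n-m)$th slat lies in $\DE(\cR)$.

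Next I would check that passing to the downward extension does not alter slat membership. By \cref{defn:downward}, the $\ell$th slat $(\ell,0)\to(\ell,1)$ lies in $\DE(\cR)$ precisely when there is some $x$ with $(\ell,0)\le x<(\ell,1)$ and $x\to(\ell,1)\in\cR$; but the only such $x$ in $[n]\times[1]$ is $(\ell,0)$ itself, so this simply says the $\ell$th slat lies in $\cR$. Combining this observation with the previous paragraph and the complement in the definition of $\phi$, I obtain the clean rule: the $m$th slat lies in $\phi(\cR)$ if and only if the $(n-m)$th slat does \emph{not} lie in $\cR$.

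Finally I would translate this rule into top-slat language. If $\cR\in S_k$ with $k\ge 0$, then the slats of $\cR$ are exactly $0,\dots,k$, so the $(n-m)$th slat is absent from $\cR$ iff $n-m\ge k+1$, that is, iff $m\le n-k-1$; hence the slats of $\phi(\cR)$ are exactly $0,\dots,n-k-1$ and $\phi(\cR)\in S_{n-k-1}$. The boundary cases behave as required: $\cR\in S_{-1}$ (no slats) forces every slat into $\phi(\cR)$, giving $\phi(\cR)\in S_n$, while $\cR\in S_n$ (all slats) gives $\phi(\cR)\in S_{-1}$. The only real subtlety, and the step demanding the most care, is the bookkeeping in the second paragraph: correctly tracking how the two contravariant operations $(-)^\op$ and $(-)^\nabla$ compose on a single morphism, since an error there would spoil the indexing $k\mapsto n-k$. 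Everything else is a direct finite verification.
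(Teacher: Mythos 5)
Your proposal is correct and follows essentially the same route as the paper's proof: both use the explicit formula from \cref{cor:phi-explicit}, observe that slats are cover relations so that $\DE$ does not change slat membership, note that the duality sends the $k$th slat to the $(n-k)$th slat, and conclude that the $m$th slat lies in $\phi(\cR)$ iff the $(n-m)$th slat is absent from $\cR$. Your additional steps (the involution reduction and the explicit boundary cases $S_{-1}\leftrightarrow S_n$) are sound but just spell out bookkeeping the paper leaves implicit.
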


\begin{proof}
 We use the explicit description of $\phi$ from \cref{cor:phi-explicit}. Since slats are covering relations in $[n]\times[1]$, a given slat is in $\cR$ if and only if it is in $\DE(\cR)$. Note furthermore that $\nabla$ sends the $k$th slat to the $(n-k)$th slat. It follows that $\cR$ contains the $k$th slat if and only if $\phi(\cR)$ does not contain the $(n-k)$th slat, thus proving the result.
\end{proof}

\begin{center}
\begin{figure}
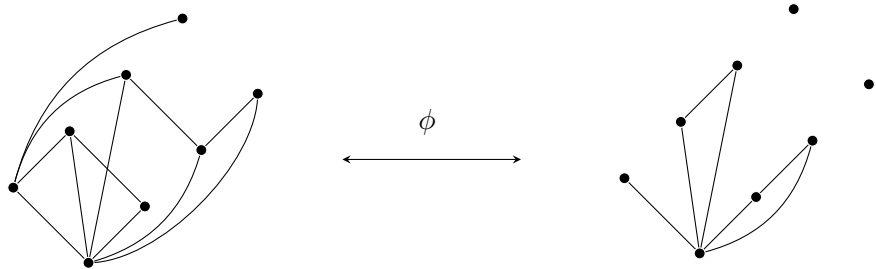

    \ctikzfig{41duality}
    \caption{The involution $\phi$ acting on a $[3]\times [1]$ transfer system. Using \cref{cor:phi-explicit}, the transfer system on the right is obtained from that on the left by applying $\DE$ (as was demonstrated in \cref{fig:downward}), rotating the result by $180^\circ$ (this has the effect of taking the opposite poset), and taking the complement.     Note that the involution exchanges the 3-slat transfer system on the left with the 1-slat transfer system on the right.}\label{duality}
\end{figure}
\end{center} 

\subsection{Cyclic groups of squarefree order}\label{sec:BBPR}

In \cite{BBPR}, Balchin and collaborators define an order-reversing involution on the lattice of transfer systems for the cyclic group of order $p_1\dots p_n$, where $p_1,\dots,p_n$ are distinct primes. In this section we prove that their involution coincides with the one defined above.

\begin{rem}\label{rem:Boolean}
 Let $G=C_{p_1\dots p_n}$. As mentioned in \cref{ex:posets}, the lattice $\sub(G)$ is isomorphic to the Boolean lattice $\cB_n$, and the involutions of \cref{ex:Boolean,ex:abeliansub} coincide via this isomorphism.
  
 The Hasse diagram of $\cB_n$ consists of the edges of an $n$-dimensional cube, and thus we can consider its $2n$ facets. Borrowing notation from \cite{BBPR}, for all $i=1,\dots n$, we denote by $B_i$ and $T_i$ the bottom and top facets, respectively. These correspond to the facets bounded by the vertices $a\in \{0,1\}^n$ with $a_i=0$ for the bottom and $a_i=1$ for the top. Given a transfer system $\cR$ on $\cB_n$, we can restrict it to a facet to obtain a transfer system therein. See \cref{ex:Phi} for an explicit example of the restriction operation.
\end{rem}

We now recall the involution $\Phi_n \colon \trans(\cB_n) \to \trans(\cB_n)$ of \cite[\S4.2]{BBPR}. The presentation here contains a minor clarification communicated to us by the authors.

\begin{const}\label{cons:phin}
For $n\geq 1$, the involution $\Phi_{n}$ on $\trans(\cB_n)$ is defined inductively as follows: 

\begin{itemize}
\item If $n=1$, $\Phi_1$ exchanges the trivial transfer system with the full transfer system.
\item Suppose $\Phi_n$ is defined for some $n\geq 1$, and let $\cR\in \trans(\cB_{n+1})$. Then $\Phi_{n+1}(\cR)$ is obtained by applying $\Phi_{n}$ to $\cR$ restricted to each facet, and placing the result in the opposite facet. Lastly, we add the long diagonal edge $\vec{0}\to \vec{1}$ if $\cR$ did not contain any nontrivial edges with target $\vec{1}$.\\
\end{itemize}
\end{const}

Balchin \emph{et al.} prove that this function is well defined and is indeed an order-reversing involution on $\trans(\cB_n)$ (see \cite[Theorem 4.5, Proposition 4.6]{BBPR}).

\begin{ex}\label{ex:Phi}
We will show how to compute $\Phi_3(\cR)$ where $\cR$ is the following transfer system on $\cB_3$.
\begin{center}
    \ctikzfig{CubeDiagram}
\end{center} 

We first perform $\Phi_2$ to $\cR$ restricted to each of the six facets and place the result on the opposite facet. For example, to obtain the restriction of $\Phi_3(\cR)$ to $T_3$, we perform $\Phi_2$ restricted to $B_3$.
\begin{center}
    \ctikzfig{FacetDiagramMap1}
\end{center}
After this is done to each facet, we assemble the results. At the end, we decide whether or not to include the long diagonal. In this case, since $\cR$ contains the edge from $(0,1,1)$ to $(1,1,1)$, we do not include the long diagonal. The diagram for $\Phi_3(\cR)$ is shown below.
\begin{center}
    \ctikzfig{CubeDiagramMap}
\end{center}
\end{ex}

\begin{thm}\label{thm:compare BBPR}
 Consider the Boolean lattice $\cB_n$ with duality $\nabla$ given by swapping $0$ and $1$. Then the involution $\phi$ of \cref{thm:cat-involution} is equal to $\Phi_n$.
\end{thm}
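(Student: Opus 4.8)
The plan is to induct on $n$, mirroring the inductive definition of $\Phi_n$ in \cref{cons:phin} and driving everything through the explicit formula $\phi(\cR)=((\DE(\cR)^\op)^\nabla)^c$ of \cref{cor:phi-explicit}. For the base case $n=1$, the lattice $\cB_1=[1]$ carries exactly two transfer systems, and a direct computation settles it: for the trivial system $\DE(\cR)=\varnothing$, so $\phi$ sends it to the full system; for the full system $\DE(\cR)=\{0\to 1\}$, which is fixed by $(-)^\op$ followed by $(-)^\nabla$, so its complement is the trivial system. This agrees with $\Phi_1$. Throughout I will abbreviate the operation $(-)^\op$-then-$(-)^\nabla$ on morphisms as $\Psi$; since $\nabla$ (swap $0\leftrightarrow 1$) is an involution, $\Psi$ is an involution, and $\phi(\cR)=\Psi(\DE(\cR))^c$.

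For the inductive step I would exploit the $2(n+1)$ facets $B_i,T_i$ of $\cB_{n+1}$, each identified with $\cB_n$ by deleting the $i$th coordinate. The key structural observation is that the long diagonal $\vec 0\to\vec 1$ is the \emph{only} morphism lying in no facet: an edge $a\to b$ avoids every facet exactly when $a_i<b_i$ for all $i$, forcing $a=\vec 0$ and $b=\vec 1$. Because both $\phi_{n+1}(\cR)$ and $\Phi_{n+1}(\cR)$ are genuine transfer systems, it therefore suffices to check that they agree after restriction to each facet and that they agree on the long diagonal; consistency on edges shared between facets is then automatic.

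The facet comparison rests on three compatibilities, which I would verify in turn. First, $\DE$ commutes with restriction to a facet: if an edge $z\to y$ lies in $F=B_i$ (resp.\ $T_i$) and is witnessed in $\DE(\cR)$ by some $x$ with $z\le x< y$ and $x\to y\in\cR$, then $x_i$ is squeezed between $z_i$ and $y_i$, so $x\in F$ and the witness lies entirely in $F$; hence $\DE(\cR)|_F=\DE(\cR|_F)$. Second, $\nabla$ carries $B_i$ to $T_i$ and, under the coordinate-deletion identifications, restricts to the $\cB_n$-duality, so $\Psi$ carries edges of a facet $F'$ bijectively onto edges of the opposite facet $F=(F')^\nabla$ compatibly with the $\cB_n$-level operation $\Psi_n$. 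Third, taking complements is compatible with restriction. Assembling these and using $\Psi^{-1}=\Psi$, an edge $e$ of $F$ lies in $\phi_{n+1}(\cR)$ iff $\Psi(e)\notin\DE(\cR|_{F'})$, which under the identifications is exactly the condition that $e\in\phi_n(\cR|_{F'})$. Invoking the inductive hypothesis $\phi_n=\Phi_n$ gives $\phi_{n+1}(\cR)|_F=\Phi_n(\cR|_{F'})$, precisely the recipe ``apply $\Phi_n$ on each facet and place the result on the opposite facet'' from \cref{cons:phin}.

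Finally, for the long diagonal: $\Psi$ fixes $\vec 0\to\vec 1$, so the formula gives $\vec 0\to\vec 1\in\phi_{n+1}(\cR)$ iff $\vec 0\to\vec 1\notin\DE(\cR)$, and unwinding \cref{defn:downward} this holds iff there is no $x<\vec 1$ with $x\to\vec 1\in\cR$, i.e.\ iff $\cR$ contains no nontrivial edge with target $\vec 1$. This is exactly the rule governing the long diagonal in \cref{cons:phin}, completing the induction. I expect the main obstacle to be the bookkeeping in the facet comparison, where one must track how the four operations $\DE$, $(-)^\op$, $(-)^\nabla$, and $(-)^c$ each interact with restriction to a facet and confirm they line up into $\phi_n$ at the $\cB_n$ level. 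The long-diagonal edge---the unique edge seen by neither facet restriction nor the inductive hypothesis---is the conceptually satisfying crux, as it is precisely the datum matched by the ad hoc diagonal rule of $\Phi_n$.
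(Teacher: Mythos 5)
Your proposal is correct and takes essentially the same route as the paper: induction on $n$ through the explicit formula of \cref{cor:phi-explicit}, with a separate check of the long diagonal and a facet-by-facet comparison resting on the compatibility of $\DE$, $\nabla$, and complementation with restriction to (opposite) facets. Your coordinate ``squeeze'' argument for $\DE(\cR)|_F=\DE(\cR|_F)$ is precisely the paper's observation that each facet is an interval in $\cB_{n+1}$, and your remark that the long diagonal is the unique edge lying in no facet simply makes explicit the paper's implicit reduction to the boundary of the cube.
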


\begin{proof}
We proceed by induction, using the explicit description of $\phi$ from \cref{cor:phi-explicit}, which uses the downward extension of \cref{defn:downward}. A quick calculation shows that the result holds when $n=1$. 

Assume the result holds for the $n$-dimensional Boolean lattice, and let $\cR$ be a transfer system on $\cB_{n+1}$.  First notice that the long diagonal $\vec{0}\to \vec{1}$ is in $\phi(\cR)$ if and only if it is not in $\DE(\cR)$. By the definition of $\DE(\cR)$, this happens exactly when $\cR$ contains no nontrivial edges with target $\vec{1}$, as needed. Thus, it remains to check that $\phi$ and $\Phi_n$ coincide on the boundary of the cube.

Let $F$ be a facet of $\cB_{n+1}$, and let $\tilde{F}$ denote its opposite facet, i.e., if $F=T_i$, then $\tilde{F}=B_i$, and vice versa. Recall from \cref{cons:phin} that the restriction of $\Phi_{n+1}(\cR)$ to $F$ is obtained by taking $\Phi_n(\cR|_{\tilde{F}})$.

Thus, by the inductive hypothesis and \cref{cor:phi-explicit}, we have that, via the canonical identification of $F$ and $\tilde{F}$ with $\cB_n$ obtained by dropping the $i$th coordinate,
\[\Phi_{n+1}(\cR)|_F=\Phi_n(\cR|_{\tilde{F}})=\phi(\cR|_{\tilde{F}})=((\DE_{\tilde{F}}(\cR |_{\tilde{F}})^\op)^{\nabla_{\tilde{F}}})^{c_{\tilde{F}}}.\]
Here $\DE_{\tilde{F}}$ and $\nabla_{\tilde{F}}$ denote the downward extension and the duality within $\tilde{F}$, and $c_{\tilde{F}}$ takes the complement within $\tilde{F}$ as well.  Hence it suffices to prove that for each facet $F$
\begin{equation}\label{eq:de_res}
(((\DE(\cR )^\op)^\nabla)^c)|_F=((\DE_{\tilde{F}}(\cR |_{\tilde{F}})^\op)^{\nabla_{\tilde{F}}})^{c_{\tilde{F}}},
\end{equation}
again, modulo the identification of the facets with $\cB_n$.

The map $\nabla$ swaps 0 and 1 in all coordinates, thus, its restriction to a facet $F$ can be obtained by restricting to $\tilde{F}$ (i.e., swapping the $i$th coordinate if $F$ is $T_i$ or $B_i$), and performing $\nabla_{\tilde{F}}$ on it (i.e., swapping all coordinates but the $i$th one). Thus \eqref{eq:de_res} reduces to
\begin{equation}\label{eq:de_res2}
(\DE(\cR )^c)|_{\tilde{F}}
=\DE_{\tilde{F}}(\cR |_{\tilde{F}})^{c_{\tilde{F}}}.
\end{equation}

Since $\tilde{F}$ is an interval in $\cB_{n+1}$ (it contains all the vertices between its bottom and top vertices), we have that
\[\DE_{\tilde{F}}(\cR | _{\tilde{F}})=\DE(\cR)|_{\tilde{F}}.\] 
Using that the restriction of the complement is the complement of the restriction, we get \eqref{eq:de_res2}, and hence the inductive step.




\end{proof}

\section{Transfer systems and noncrossing partitions}\label{sec:nc}

While it is already known that $C_{p^n}$-transfer systems can be counted with Catalan numbers, we re-prove this result by presenting a natural bijection between noncrossing partitions of the set $\{0,\dots,n\}$ and $C_{p^n}$-transfer systems. Additionally, we will use this bijection and the structure of noncrossing partitions in order to relate Narayana numbers and transfer systems. We first recall the definition of a noncrossing partition.

Recall that $\sub(C_{p^n})$ is isomorphic to the poset $[n]=\{0<1<\dots<n\}$ of \cref{ex:posets}\eqref{linear}. Thus for ease of notation we work with transfer systems on $[n]$.  

\begin{defn}\label{defn:noncrossing}
A partition of the set $\{0,\dots,n\}$ is \emph{noncrossing} if for all $0 \leq a < b < c < d \leq n$ such that $a$ and $c$ are in the same class and $b$ and $d$ are in the same class, then $a$, $b$, $c,$ and $d$ are all in the same class. We let $\NC_{n+1}$ denote the set of all noncrossing partitions of the set $\{0, \dots, n\}$. 
\end{defn}

Observe that the index in $\NC_{n+1}$ corresponds to the cardinality of $[n]$. It is well-known (see \cite[Exercise 6.19(pp)]{Stanley2} and \cite{becker}) that noncrossing partitions are counted by Catalan numbers:
\[
  |\NC_n| = \Cat(n) = \frac{1}{n+1}\binom{2n}{n}.
\]

In order to construct a bijection between noncrossing partitions and transfer systems in $[n]$, we need some key definitions.

\begin{defn}\label{defn:M(X)}
Let $\cR \in \transfers([n])$. We say $i \to j \in \cR$ is a \emph{maximal edge in $\cR$} if $i\neq j$ and there exists no $k\geq j$ such that $i \to k \in \cR$. We denote the set of all maximal edges in $\cR$ by $M(\cR)$.
\end{defn}

\begin{figure}[H]
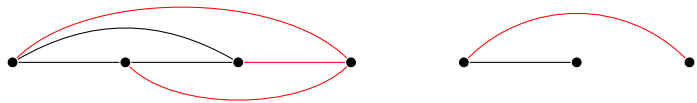

    \ctikzfig{9}
    \caption{Example of maximal edges in a transfer system in the lattice [6]. The maximal edges are marked in red.}
    \label{fig:my_label}
\end{figure}

Note that there is at most one maximal edge starting at a given vertex. We now construct the bijection between  $\transfers([n])$ and $\NC_{n+1}$.

\begin{defn}\label{defn: Rubin closure} 
Let $(\cP,\leq)$ be a poset, and let $\F$ be a binary relation on $\cP$ that refines $\leq$. We denote by $\langle \F \rangle$ the minimal transfer system that contains $\F$ as given in \cite[Construction A.1]{rubin_steiner}. We say $\F$ is a \emph{generating set} for $\langle \F \rangle$. 

For a transfer system $\mathcal{R}$ on $\cP$, we define the \emph{minimal generating number} of $\mathcal{R}$ to be the minimal cardinality of a generating set for $\cR$. Any generating set with minimal cardinality is called a \emph{minimally generating set}.
\end{defn}

The following lemma tells us that the maximal edges of $\cR$ form a minimally generating set for $\cR$.

\begin{lemma}\label{prop: gen by min}
Let $\cR\in \transfers([n])$. Then $\cR=\langle M(\cR) \rangle$, and moreover, $M(\cR)$ is a minimally generating set for $\cR$.
\end{lemma}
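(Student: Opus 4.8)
The plan is to prove two things: first that $\langle M(\cR)\rangle = \cR$, and second that no generating set can be smaller than $M(\cR)$. Throughout I work in $[n]$, where the transfer system conditions simplify because any two elements are comparable, so the meet $i\wedge j$ is just $\min(i,j)$ and the restriction axiom reads: if $i\to j\in\cR$ and $k\le j$, then $\min(i,k)\to k\in\cR$.

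For the first claim, I would show both inclusions. Since $M(\cR)\subseteq \cR$ and $\cR$ is itself a transfer system containing $M(\cR)$, minimality of $\langle M(\cR)\rangle$ gives $\langle M(\cR)\rangle \subseteq \cR$. For the reverse inclusion, I would take an arbitrary nontrivial edge $i\to j\in\cR$ and produce it from $M(\cR)$ using only the transfer-system closure operations. The key observation is that starting from $i$, there is a \emph{unique} maximal edge $i\to m\in M(\cR)$ (with $m\ge j$), because the set of targets $\{k : i\to k\in\cR\}$ is finite and totally ordered, hence has a maximum $m$; by \cref{defn:M(X)} this edge $i\to m$ lies in $M(\cR)$. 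Now since $j\le m$, the restriction axiom applied to $i\to m$ and $j\le m$ yields $\min(i,j)\to j = i\to j\in\cR$ already—but I must recover this inside $\langle M(\cR)\rangle$ rather than in $\cR$. The point is that $\langle M(\cR)\rangle$ is itself closed under restriction, so from $i\to m\in M(\cR)\subseteq\langle M(\cR)\rangle$ and $j\le m$ (with $i\le j$, so $\min(i,j)=i$) we directly conclude $i\to j\in\langle M(\cR)\rangle$. This shows $\cR\subseteq\langle M(\cR)\rangle$, giving equality.

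For minimality, suppose $\F$ is any generating set for $\cR$, so $\langle\F\rangle=\cR$. I would argue that every maximal edge of $\cR$ must itself belong to $\F$. The reason is that a maximal edge $i\to m$ cannot be produced from \emph{other} edges of $\cR$ via the closure operations without already being present: transitivity produces an edge $i\to m$ only from $i\to k$ and $k\to m$ with $i<k<m$, but then $i\to m$ being reachable would not contradict maximality directly, so the cleaner route is to show the generating closure in $[n]$ never creates a maximal edge from smaller data. Concretely, I would analyze the two generating operations. Restriction applied to an edge $a\to b$ and $c\le b$ produces $\min(a,c)\to c$, whose source $\min(a,c)\le a$; such a restricted edge has target $c\le b$ and if it equals $i\to m$ then one checks $a\to b$ was an edge with $b\ge m$ and $a\le i$, contradicting that $i\to m$ is maximal unless $a\to b$ already \emph{is} $i\to m$. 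Transitivity of $i\to k$ and $k\to m$ requires $k\to m\in\cR$ with $k>i$, but then by maximality considerations the edge $i\to m$ witnessing the largest target from $i$ forces $k\to m$ to itself be constrained. I would make this precise by an induction on the number of closure steps used to build $i\to m$, showing the first step to \emph{introduce} an edge with source $i$ and target $m$ must have $i\to m$ as an input.

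The main obstacle is the minimality argument: unlike the generation claim, which follows cleanly from closure properties, showing $M(\cR)\subseteq\F$ requires a careful analysis of how Rubin's closure \cite[Construction A.1]{rubin_steiner} can and cannot create a maximal edge. The crux is that a maximal edge $i\to m$ is characterized by $m$ being the largest target among all edges out of $i$; I would isolate the lemma that neither restriction nor transitivity can produce an edge out of $i$ with target strictly larger than all targets appearing in the inputs, so the \emph{maximal} target out of each source is an invariant that must be seeded directly in $\F$. Once this invariant is established, $M(\cR)\subseteq\F$ is immediate, giving $|\F|\ge |M(\cR)|$, and combined with the first part that $M(\cR)$ itself generates $\cR$, this proves $M(\cR)$ is minimally generating.
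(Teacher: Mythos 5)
Your first half is correct and is exactly the paper's argument: every nontrivial edge $i\to j\in\cR$ is the restriction of the maximal edge $i\to m$ out of $i$, so $\cR\subseteq\langle M(\cR)\rangle$, and the reverse inclusion is minimality of the closure. The gap is in the minimality half. Your strategy rests on the claim that every maximal edge must belong to any generating set $\F$, which you propose to establish via the ``invariant'' that the closure operations never create an edge out of $i$ with target strictly larger than the targets already present out of $i$. That invariant is false: transitivity does exactly this. Concretely, in $\transfers([2])$ take $\F=\{0\to 1,\ 1\to 2\}$. Then $\langle\F\rangle$ is the complete transfer system on $[2]$ (transitivity adds $0\to 2$), and $M(\langle\F\rangle)=\{0\to 2,\ 1\to 2\}$, yet the maximal edge $0\to 2$ is not in $\F$. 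So $M(\cR)\subseteq\F$ fails for generating sets in general, and the induction on closure steps you sketch cannot be completed; you even brushed against this when you noted that transitivity producing $i\to m$ ``would not contradict maximality directly,'' but then pressed on with the false invariant.

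The paper's proof avoids this by counting \emph{sources} rather than tracking the maximal edges themselves. Let $S$ be the set of sources of nontrivial edges of $\cR$. For each $i\in S$, restriction puts the cover edge $i\to(i+1)$ in $\cR$; a cover cannot arise from a nontrivial composite (there is no $k$ with $i<k<i+1$), and a restriction $\min(a,z)\to z$ equals $i\to(i+1)$ only if $a=i$, so any generating set must contain at least one edge with source $i$, for every $i\in S$. Hence $|\F|\ge|S|$, and since each $i\in S$ supports exactly one maximal edge, $|S|=|M(\cR)|$. Note that in the counterexample above $\F$ meets this bound without containing $M(\cR)$: the genuine invariant is the \emph{set of sources} of a generating set, not its maximal targets. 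Replacing your false key lemma with this source-counting argument repairs the proof and recovers the paper's reasoning.
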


\begin{proof}
The equality follows from the fact that any nontrivial edge in $\cR$ is the restriction of an edge in $M(\cR)$.

Consider the set $S$ of vertices in $[n]$ that are the sources of nontrivial edges in $\cR$.
If $i \in S$ then $i \to (i+1) \in \cR$ by restriction. Since edges of this form are covering relations in $[n]$, we know these edges cannot be obtained by closing under transitivity. Thus the only edges that could generate the edge $i\to (i+1)$ are precisely edges of the form $i\to j$ where $j \geq i+1$. From this it is then clear that in order to generate $\cR$, a generating set must have a size of at least $\lvert S \rvert$.  Note that a vertex $i$ is in $S$ if and only if it is the source of an edge in $M(\cR)$. Thus $\lvert M(\cR) \rvert = \lvert S \rvert$, and the claim follows. 
\end{proof}

\begin{defn}\label{defn: G(P)}
Let $\pi$ be a noncrossing partition of $[n]$. Let $J(\pi)$ be the set of nontrivial edges in $[n]$ connecting each element of a block of $\pi$ with the largest element in that block.
\end{defn}

\begin{figure}[H]
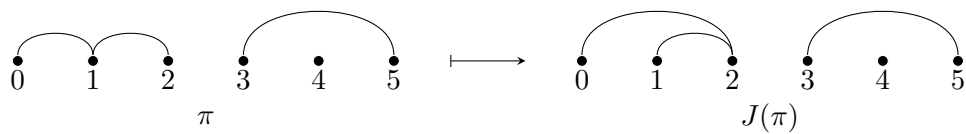

\ctikzfig{GP}
\caption{An example of a noncrossing partition $\pi$ and edge set $J(\pi)$. Here $\pi = \{\{0,1,2\},\{3,5\},\{4\}\}$ with arcs connecting nearest elements of each block; that these arcs do not intersect indicates that $\pi$ is indeed noncrossing.}
\end{figure}

\begin{lemma}\label{lem:gen_by_J}
Let $\pi$ be a noncrossing partition of $[n]$. Then the transfer system $\langle J(\pi)\rangle$ consists of the closure under restriction of $J(\pi)$.
\end{lemma}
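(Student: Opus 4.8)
The plan is to reduce the lemma to a single transitivity check. By definition, $\langle J(\pi)\rangle$ is the smallest transfer system containing $J(\pi)$, obtained by closing $J(\pi)$ under reflexivity, transitivity, and restriction. On the one hand, any transfer system containing $J(\pi)$ is closed under restriction and hence contains the restriction-closure of $J(\pi)$; on the other hand, if the restriction-closure is itself a transfer system, then by minimality it must coincide with $\langle J(\pi)\rangle$. Since this closure refines $\le$, contains all identities, and is closed under restriction by construction (and antisymmetry is automatic for any relation refining $\le$), the entire content of the lemma is the assertion that \emph{the restriction-closure of $J(\pi)$ is already closed under transitivity}.

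First I would record an explicit description of this closure. In the chain $[n]$, restricting an edge $i\to j$ along some $z\le j$ produces $\min(i,z)\to z$, which is the edge $i\to z$ when $z\ge i$ and a trivial edge otherwise; in particular, restriction never alters the source of a nontrivial edge. Writing $B(a)$ for the block of $\pi$ containing $a$ and $M_a=\max B(a)$, each generating edge $a\to M_a$ in $J(\pi)$ thus restricts to exactly the edges $a\to b$ with $a\le b\le M_a$. It follows that the restriction-closure of $J(\pi)$ consists of all identities together with the nontrivial edges $a\to b$ satisfying $a<b\le M_a$, and one checks directly that this collection is closed under further restriction.

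The key step, and the only place the noncrossing hypothesis is used, is transitivity. Suppose $a\to b$ and $b\to c$ are nontrivial edges of the closure, so that $a<b\le M_a$ and $b<c\le M_b$; I must show $c\le M_a$, which places $a\to c$ in the closure. I would first rule out $b=M_a$: in that case $b$ is the maximum of $B(a)$ while also lying in $B(b)$, forcing $B(a)=B(b)$ and hence $M_b=b$, which contradicts $b<c\le M_b$. So $b<M_a$. Now if $c>M_a$, then $M_b\ge c>M_a$, and the four elements $a<b<M_a<M_b$ satisfy $a,M_a\in B(a)$ and $b,M_b\in B(b)$; by \cref{defn:noncrossing} they must then all lie in a single block, forcing $B(a)=B(b)$ and $M_a=M_b$, a contradiction. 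Hence $c\le M_a$, as required.

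The main obstacle is this transitivity argument: one must identify the correct four elements $a<b<M_a<M_b$ and recognize that they exhibit a crossing of the blocks $B(a)$ and $B(b)$ unless those blocks coincide. Once transitivity is in hand, the restriction-closure is reflexive, refines $\le$, and is closed under both restriction and transitivity, hence is a transfer system containing $J(\pi)$; minimality then gives that $\langle J(\pi)\rangle$ equals the restriction-closure of $J(\pi)$, which is the statement of the lemma.
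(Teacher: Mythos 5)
Your proof is correct and takes essentially the same route as the paper: both reduce the lemma to showing that the restriction-closure of $J(\pi)$ is already transitive, and both settle the key case by applying the noncrossing condition to the four points $a<b<M_a<M_b$ (the paper's $i<j<j'<k'$) to force $M_b\le M_a$, hence $c\le M_a$. If anything, your write-up is slightly more careful: you explicitly rule out the degenerate case $b=M_a$ (which the paper's argument needs implicitly to get four distinct points), and you replace the paper's appeal to Rubin's ``close under restriction, then transitivity'' construction with a direct minimality argument, which is self-contained.
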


\begin{proof}
As noted in \cite[Construction A.1]{rubin_steiner}, $\langle J(\pi)\rangle$ is constructed by first closing under restriction and then under transitivity. Thus, it is enough to prove that the closure under restriction of $J(\pi)$ is already closed under transitivity. Suppose $i\to j$ and $j\to k$ are obtained by restriction from $J(\pi)$. This means that there exist $j'\geq j$ and $k'\geq k$ such that $i$ and $j'$ are in the same class and $j'$ is the largest element therein, and similarly, $j$ and $k'$ are in the same class and $k'$ is the largest element therein. Since $\pi$ is noncrossing, we must have that $k'\leq j'$. Then $i\to k$ is the restriction of $i\to k'$, which is in $J(\pi)$. 
\end{proof}

\begin{thm}\label{thm:nc-bijection}
 Let $\cR \in \transfers([n])$. Let $\psi(\cR)$ be the partition of $[n]$ associated to the equivalence relation generated by $M(\cR)$. Then
 \begin{enumerate}
  \item $\psi(\cR)$ is a noncrossing partition, and
  \item the map $\psi \colon \transfers([n]) \to \NC_{n+1}$ is a bijection with inverse $\chi(\pi)=\langle J(\pi)\rangle$.
 \end{enumerate}
\end{thm}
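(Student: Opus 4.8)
The plan is to use the linearity of $[n]$ to determine $M(\cR)$ and the blocks of $\psi(\cR)$ completely. For $i\in[n]$ let $m(i)$ denote the largest $j$ with $i \rR j$; call $i$ a \emph{source} when $m(i)>i$, so the nontrivial edges of $M(\cR)$ are exactly the $i\to m(i)$ with $i$ a source. Two structural facts drive everything. First, restriction gives an interval property: if $i \rR j$ and $i\le k\le j$, then applying the restriction axiom with target $j$ and $k\le j$ (using $i\wedge k = i$) yields $i \rR k$, so $i \rR k$ for all $i\le k\le m(i)$. Second, transitivity forces the target of a maximal edge to be a non-source: if $i$ is a source and $m(i)$ were also a source, then $i \rR m(i) \rR m(m(i))$ would give $i \rR m(m(i))$ with $m(m(i))>m(i)$, contradicting maximality. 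Consequently, in the graph with edge set $M(\cR)$ each source has the single neighbor $m(i)$ and each non-source is adjacent only to the sources mapping to it, so the components are stars. I conclude that the blocks of $\psi(\cR)$ are precisely $B_v = \{v\}\cup\{i : m(i)=v\}$ indexed by non-sources $v$, each with maximum $v$.

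With the block structure in hand, I would prove (1) by checking the crossing criterion directly. Suppose $a<b<c<d$ with $a,c$ in a block $B_v$ and $b,d$ in a block $B_w$; it suffices to show $B_v=B_w$. Since $a<v$ and $b<w$, both $a$ and $b$ are sources with $m(a)=v$ and $m(b)=w$, while $c \rR v$ (as $c\le v=\max B_v$). From $a \rR v$ and $a\le b\le v$ the interval property gives $a \rR b$; together with $b \rR w$ and transitivity this yields $a \rR w$, so $w\le m(a)=v$. Symmetrically, from $b \rR w$ and $b\le c\le w$ (note $c<d\le w$) I get $b \rR c$, and then $b \rR c \rR v$ gives $b \rR v$, so $v\le m(b)=w$. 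Hence $v=w$ and all four points share a block, so $\psi(\cR)$ is noncrossing. I expect this crossing computation---where the restriction and transitivity axioms must be chained in precisely the right order---to be the main obstacle.

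For (2) I would show that $\chi$ and $\psi$ are mutually inverse by matching their generating edge sets. On one side, the block description gives $J(\psi(\cR)) = \{i\to v : v=\max B_v,\ i\in B_v\setminus\{v\}\} = \{i\to m(i) : i \text{ a source}\} = M(\cR)$, so $\chi(\psi(\cR)) = \langle M(\cR)\rangle = \cR$ by \cref{prop: gen by min}. On the other side, \cref{lem:gen_by_J} identifies $\chi(\pi)=\langle J(\pi)\rangle$ with the restriction closure of $J(\pi)$; since restricting an edge $i\to j$ in $[n]$ produces only the edges $i\to k$ with $i\le k\le j$ (together with trivial edges), the nontrivial edges out of a vertex $i$ in $\chi(\pi)$ are exactly the $i\to k$ with $i<k\le\beta(i)$, where $\beta(i)$ is the maximum of the $\pi$-block of $i$. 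Thus $m(i)=\beta(i)$ and $M(\chi(\pi))=J(\pi)$; as $J(\pi)$ joins every element of a block to that block's unique maximum, the equivalence relation it generates is exactly $\pi$, giving $\psi(\chi(\pi))=\pi$. Both composites being the identity, $\psi$ is a bijection with inverse $\chi$.
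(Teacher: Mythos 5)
Your proof is correct and follows essentially the same route as the paper: the same restriction--transitivity--maximality chain (restrict to get $a\to b$ and $b\to c$, compose, then force the two maximal targets to coincide) establishes the noncrossing property, and the bijection is obtained exactly as in the paper by matching generating sets, $J(\psi(\cR))=M(\cR)$ and $M(\chi(\pi))=J(\pi)$, and invoking \cref{prop: gen by min} and \cref{lem:gen_by_J}. Your star decomposition of $M(\cR)$ (via the observation that targets of maximal edges are non-sources) is a nice explicit justification of the description of the equivalence relation generated by $M(\cR)$, which the paper leaves as a check for the reader.
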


\begin{proof}
 We denote by $\sim_\cR$ the equivalence relation generated by $M(\cR)$, \emph{i.e.}, $\sim_\cR$ is the intersection of all equivalence relations containing $M(\cR)$ where directed edges in $M(\cR)$ are interpreted as relations.  Using the transitivity of a transfer system, one can check that for $i<j \in [n]$, $i\sim_\cR j$ if and only if either $i\to j \in M(\cR)$ or there exists $k>j$ such that $i \to k \in M(\cR)$ and $j \to k \in M(\cR)$. Note that in either case, there exists $k\geq j$ such that $i\to k \in M(\cR)$ and $j\to k \in \cR$.
  
 To prove that $\psi(\cR)$ is a noncrossing partition, let $a<b<c<d$ in $[n]$ such that $a\sim_\cR c$ and $b \sim_\cR d$. Thus, there exist $e\geq c$ and $f\geq d$ such that $a\to e$ and $b\to f$ are in $M(\cR)$, and $c\to e$ and $d\to f$ are in $\cR$. By restriction, we get that $a\to b$ and $b\to c$ are in $\cR$, and thus $a\to f$ and $b\to e$ are also in $\cR$ by transitivity. Since $a\to e$ and $b\to f$ are maximal, that implies that $e=f$, and hence $a$, $b$, $c$, and $d$ are all in the same equivalence class, as required.
 
We now prove that $\psi$ is a bijection. Note that \cref{lem:gen_by_J} implies that for a noncrossing partition $\pi$,
\[J(\pi)=M(\chi(\pi)).\]
Since the partition generated by $J(\pi)$ is precisely $\pi$, it follows that $\psi(\chi(\pi))=\pi$.

Similarly, the description above of the equivalence relation generated by $M(\cR)$ implies that 
\[M(\cR)=J(\psi(\cR)).\]
That combined with \cref{prop: gen by min} proves that $\chi(\psi (\cR)) = \cR$, thus showing that $\psi$ and $\chi$ are indeed inverses of each other.

 \end{proof}

\begin{rem}
The above theorem recovers \cite[Theorem 1]{CPCatalan} via a direct bijection with noncrossing partitions. The original proof in \cite{CPCatalan} proceeds by establishing the Catalan recurrence relation among the numbers $|\transfers([n])|$.
\end{rem}

With $\psi$ established as a bijection, we can now use $\psi$ to directly enumerate transfer systems minimally generated by edge sets of a certain size. Note that refinement of partitions gives an order relation on the collection of noncrossing partitions.  

\begin{prop}[{\cite{Kreweras}, see also \cite{Narayana}}] 
The poset of $\NC_n$ under refinement forms a lattice called the \emph{Kreweras lattice}. It is graded by the function 
\[\operatorname{rank}(\pi) := n - \operatorname{bk}(\pi),\] 
where $\operatorname{bk}(\pi)$ is the number of blocks of $\pi$.\hfill\qedsymbol 
\end{prop}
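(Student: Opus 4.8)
The plan is to treat the two assertions separately: first that $\NC_n$ is a lattice, and then that the stated rank function witnesses gradedness.

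For the lattice structure, the strategy is to realize $\NC_n$ as a meet-closed subposet of the full partition lattice $\Pi_n$ (partitions of an $n$-element chain under refinement, with finer partitions below) and then invoke \cref{rem:meet-lattice}. Concretely, I would first check that the meet of two noncrossing partitions $\pi,\sigma$ in $\Pi_n$ --- their common refinement, in which $a$ and $b$ are equivalent exactly when they are equivalent in both $\pi$ and $\sigma$ --- is again noncrossing: if $a<b<c<d$ with $a\sim c$ and $b\sim d$ in the common refinement, then these relations hold in $\pi$ and in $\sigma$ separately, so noncrossingness of each of $\pi$ and $\sigma$ (\cref{defn:noncrossing}) forces all four elements into a single block of each, hence into a single block of the refinement. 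Since this $\Pi_n$-meet lies in $\NC_n$ and is a lower bound there, it is the meet within $\NC_n$ as well, so $\NC_n$ has all meets. As $\NC_n$ also has a greatest element (the one-block partition, which is trivially noncrossing), \cref{rem:meet-lattice} shows that $\NC_n$ is a lattice.

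For gradedness, the candidate rank is $\operatorname{rank}(\pi)=n-\operatorname{bk}(\pi)$, which vanishes on the bottom element $\hat 0$ (all singletons, $n$ blocks) and equals $n-1$ on the top. Since refinement corresponds to splitting or merging blocks, $\pi<\sigma$ forces $\operatorname{bk}(\pi)>\operatorname{bk}(\sigma)$, so it suffices to prove that every cover relation drops the block count by exactly one; the rank function then increases by one along each cover, and gradedness follows by the standard criterion. I would establish this by showing that whenever $\pi<\sigma$ in $\NC_n$ with $\operatorname{bk}(\pi)-\operatorname{bk}(\sigma)\ge 2$, there is a noncrossing $\rho$ with $\pi<\rho<\sigma$, so no such pair can be a cover. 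To build $\rho$, pick a block $C$ of $\sigma$ that is the union of at least two blocks of $\pi$ and merge just two of those blocks; the resulting $\rho$ automatically satisfies $\pi<\rho\le\sigma$ with $\operatorname{bk}(\rho)=\operatorname{bk}(\pi)-1>\operatorname{bk}(\sigma)$, hence $\rho\neq\sigma$.

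The crux --- and the main obstacle --- is choosing the two blocks of $\pi$ inside $C$ so that $\rho$ remains noncrossing. Here I would first reduce global noncrossingness to noncrossingness within $C$: since $\rho$ refines the noncrossing partition $\sigma$, any crossing quadruple would have both of its equivalent pairs lying in a common $\sigma$-block (otherwise two distinct $\sigma$-blocks would cross), and only $C$ has been altered, so it suffices that $\rho$ restricted to $C$ is noncrossing. Within $C$ I would merge the block $B$ containing $\max C$ with the block $B'$ of the largest element of $C$ not lying in $B$. The key structural feature is that every element of $C$ above this threshold already lies in $B$, which pins down the locations of $\max C$ and of $\max B'$; a short case analysis on the possible alternating patterns of a putative new crossing between the merged block and an interfering third block then always exhibits a crossing already present in $\pi$ restricted to $C$ (between $B$ or $B'$ and that third block), contradicting that $\pi$ is noncrossing. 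Verifying that this case check is exhaustive is the one genuinely combinatorial step; everything else is bookkeeping with the refinement order and \cref{defn:noncrossing}. The identification of this graded lattice with the object of Kreweras \cite{Kreweras} and with the Narayana grading \cite{Narayana} is then a matter of matching definitions.
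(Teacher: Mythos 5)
Your proposal is correct, but it is worth noting that the paper does not prove this proposition at all: it is quoted from the literature with citations to Kreweras and to Narayana-type results, and the $\qedsymbol$ marks it as assumed. What you have supplied is a self-contained proof, and both halves check out. The lattice half is the classical argument (and essentially Kreweras's own): the common refinement of two noncrossing partitions is again noncrossing, since a crossing pair of equivalent pairs in the meet is a crossing pair in each of $\pi$ and $\sigma$ separately, so $\NC_n$ is closed under the meet of $\Pi_n$; together with the one-block top element, \cref{rem:meet-lattice} applies. The gradedness half is also sound, and your flagged ``crux'' does go through: after reducing to a single $\sigma$-block $C$ (correctly, via the observation that any crossing quadruple of $\rho$ lies in one $\sigma$-block because $\rho$ refines the noncrossing $\sigma$), merging the block $B\ni\max C$ with the block $B'$ of $m'=\max(C\setminus B)$ leaves exactly four possible crossing patterns against a third block $D$ of $\pi|_C$, namely $B\,D\,B'\,D$, $B'\,D\,B\,D$, $D\,B\,D\,B'$, and $D\,B'\,D\,B$ in increasing order; in each case replacing the $B$-element by $\max C$ or the $B'$-element by $m'$ (using that every element of $C$ above $m'$ lies in $B$, so the $D$-elements in question sit below $m'$, and disjointness of blocks rules out the degenerate equalities) produces a crossing of $D$ with $B$ or with $B'$ already present in $\pi$, a contradiction; patterns with both merged-origin elements in $B$ or both in $B'$ are crossings of $\pi$ outright. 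One small point of care that you handle implicitly and correctly: $C$ need not be an interval of $\{0,\dots,n\}$, but only noncrossingness of $\rho|_C$ in the induced order is needed, which is exactly what your reduction delivers. In short, where the paper buys brevity by citing a classical fact peripheral to its main results, your argument buys self-containment at the cost of the (exhaustive but finite) case check.
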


\begin{rem}
Beware that the Tamari lattice is a strict extension of the Kreweras lattice.
\end{rem}

\begin{defn}
For $n\ge 1$ and $1\le k\le n$, the $(n,k)$-th \emph{Narayana number} is
\[
  \mathrm{Nar}(n,k) := \frac{1}{n}\binom nk \binom n{k-1}.
\]
\end{defn}

\begin{prop}[{\cite{Kreweras}, see also \cite{Narayana}}]\label{prop: Narayana rank} 
Let $\NC_n(k)$ denote the number of partitions in $\NC_n$ with rank $k$. Then $\NC_n(k)$ is given by the $(n,k)$-th Narayana number, \emph{i.e.},
\[
\NC_n(k) = \frac{1}{n}\binom{n}{k}\binom{n}{k-1}.
\]
\hfill\qedsymbol
\end{prop}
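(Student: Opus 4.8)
The statement is the classical computation of the rank sizes of the Kreweras lattice, attributed here to \cite{Kreweras}; the transfer-system machinery of this section is not logically required, although I would record at the end how $\psi$ repackages the result. The plan is first to convert the rank grading into a block count. Since $\operatorname{rank}(\pi) = n - \operatorname{bk}(\pi)$, enumerating partitions of a fixed rank is the same problem as enumerating noncrossing partitions of an $n$-element set into a fixed number of blocks, the two indexings being related by $\operatorname{rank} = n-\operatorname{bk}$ together with the symmetry $\mathrm{Nar}(n,j)=\mathrm{Nar}(n,n+1-j)$ of the Narayana numbers. It therefore suffices to show that the number of noncrossing partitions of $\{1,\dots,n\}$ into exactly $j$ blocks is $\frac{1}{n}\binom{n}{j}\binom{n}{j-1}$.

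For this I would pass to lattice paths. There is a standard bijection between $\NC_n$ and Dyck paths of semilength $n$ (monotone east/north paths from $(0,0)$ to $(n,n)$ staying weakly below the diagonal) under which the noncrossing condition becomes the subdiagonal condition and the number of blocks is carried to the number of peaks of the path. The content of this step is the verification that the block statistic and the peak statistic match, which I would extract from the arc diagram of $\pi$; this reduces the problem to the purely path-theoretic count of Dyck paths of semilength $n$ with exactly $j$ peaks.

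The main obstacle is this peak enumeration, which is precisely where the Narayana product emerges. A subdiagonal path begins with an east step and ends with a north step, so it has the shape $E^{c_1}N^{d_1}\cdots E^{c_j}N^{d_j}$ with positive run lengths summing to $n$ in each coordinate; dropping the subdiagonal constraint leaves exactly $\binom{n-1}{j-1}^2$ such paths with $j$ peaks. To restrict to the subdiagonal paths \emph{without disturbing the peak count}, I would apply the reflection principle in its Lindstr\"{o}m--Gessel--Viennot form to a suitable pair of paths: the reflected ``bad'' configurations are counted by $\binom{n-1}{j}\binom{n-1}{j-2}$, so the number of subdiagonal paths with $j$ peaks is the determinant $\binom{n-1}{j-1}^2 - \binom{n-1}{j}\binom{n-1}{j-2}$, which a short binomial manipulation rewrites as $\frac{1}{n}\binom{n}{j}\binom{n}{j-1}$. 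Arranging the reflection so that it preserves peaks is the delicate point, and is exactly what the LGV setup guarantees; a generating-function alternative, which solves the quadratic functional equation for $\sum_{\pi} x^{|\pi|}t^{\operatorname{bk}(\pi)}$ (degenerating to the Catalan equation at $t=1$) and extracts coefficients by Lagrange inversion, meets the same essential difficulty in the coefficient extraction. Finally, for the instance $\NC_{n+1}$ underlying \cref{thm:nc-bijection}, the identity $M(\cR)=J(\psi(\cR))$ gives $\operatorname{rank}(\psi(\cR)) = |M(\cR)|$, so by \cref{prop: gen by min} this proposition is equivalent to counting transfer systems on $[n]$ by minimal generating number, which is the reformulation that feeds the Narayana corollary.
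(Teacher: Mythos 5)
You were right that the paper supplies no proof here---the proposition is quoted from \cite{Kreweras} with only citations---so constructing one is the appropriate move, and your combinatorial engine is the standard one and essentially sound: the bijection from noncrossing partitions to subdiagonal lattice paths carrying blocks to peaks, the shape decomposition $E^{c_1}N^{d_1}\cdots E^{c_j}N^{d_j}$ giving $\binom{n-1}{j-1}^2$ unconstrained paths with $j$ peaks, and the Lindstr\"om--Gessel--Viennot determinant $\binom{n-1}{j-1}^2-\binom{n-1}{j}\binom{n-1}{j-2}=\frac{1}{n}\binom{n}{j}\binom{n}{j-1}$ together constitute a correct proof of Kreweras' theorem that the number of noncrossing partitions of an $n$-element set with exactly $j$ \emph{blocks} is $\mathrm{Nar}(n,j)$. (The block-to-peak verification is the one bijective step you leave unchecked, but it is standard and fine in a sketch.)

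The genuine gap is your first step, the conversion from blocks to rank. You assert that $\operatorname{rank}(\pi)=n-\operatorname{bk}(\pi)$ together with the symmetry $\mathrm{Nar}(n,j)=\mathrm{Nar}(n,n+1-j)$ reduces the stated formula to the block count, but you never carry out the substitution; doing so gives that the number of rank-$k$ partitions is $\mathrm{Nar}(n,n-k)=\mathrm{Nar}(n,k+1)=\frac{1}{n}\binom{n}{k+1}\binom{n}{k}$, which is \emph{not} the displayed $\mathrm{Nar}(n,k)$: the symmetry introduces a shift of one that cannot be absorbed. Concretely, for $n=3$ all five partitions of a three-element set are noncrossing; the three two-block partitions have rank $1$, yet the display gives $\NC_3(1)=\frac{1}{3}\binom{3}{1}\binom{3}{0}=1$. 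So with the rank convention of the preceding proposition, the display as printed is itself off by one (it is correct if $\NC_n(k)$ is read as the number of partitions with $k$ blocks, equivalently rank $n-k$), and your sketch glosses over exactly the point where this would have surfaced. A careful writeup should prove the block-count form, as your path argument does, and then either state the rank-$k$ count as $\mathrm{Nar}(n,k+1)$ or re-index the proposition; note that the same shift propagates to the corollary via \cref{prop: NC maximal edges}: on $[2]$ there are three transfer systems minimally generated by one edge (those generated by $0\to 1$, by $1\to 2$, and by $0\to 2$), not one, so the count of $\transfers_k([n])$ should read $\frac{1}{n+1}\binom{n+1}{k+1}\binom{n+1}{k}$.
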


With this rank property established in the context of noncrossing partitions, we can then use $\chi$ to see what this rank looks like for transfer systems. 

\begin{prop}\label{prop: NC maximal edges}
Let $\pi \in \NC_{n+1}$. Then the rank of $\pi$ in the Kreweras lattice is equal to the minimal generating number for $\chi(\pi)$.
\end{prop}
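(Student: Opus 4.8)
The plan is to reduce the statement to a direct count of the edges in $J(\pi)$, using two facts already established. First I would recall from \cref{prop: gen by min} that for any transfer system $\cR$ on $[n]$, the set of maximal edges $M(\cR)$ is a minimally generating set, so that the minimal generating number of $\cR$ is exactly $|M(\cR)|$. Applied to $\cR=\chi(\pi)$, this says the minimal generating number of $\chi(\pi)$ equals $|M(\chi(\pi))|$. Next I would invoke the identity $M(\chi(\pi))=J(\pi)$, which is recorded in the proof of \cref{thm:nc-bijection} (it follows from \cref{lem:gen_by_J}). Together these reduce the problem to showing $|J(\pi)|=\operatorname{rank}(\pi)$.

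The count of $|J(\pi)|$ is then immediate from \cref{defn: G(P)}: for each block $B$ of $\pi$, the set $J(\pi)$ contains exactly one edge from every non-maximal element of $B$ to the maximal element of $B$, contributing $|B|-1$ edges, and edges from distinct blocks are distinct. Summing over all blocks, and using that the blocks partition the ground set $\{0,\dots,n\}$ of cardinality $n+1$, I obtain
\[
|J(\pi)| \;=\; \sum_{B} (|B|-1) \;=\; (n+1) - \operatorname{bk}(\pi).
\]

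Finally I would compare this with the rank function of the Kreweras lattice. Since $\pi\in\NC_{n+1}$ is a partition of the $(n+1)$-element set $\{0,\dots,n\}$, the grading formula gives $\operatorname{rank}(\pi)=(n+1)-\operatorname{bk}(\pi)$, which matches the count above, yielding $|J(\pi)|=\operatorname{rank}(\pi)$ and completing the argument. The proof is essentially bookkeeping once the machinery of \cref{prop: gen by min} and \cref{thm:nc-bijection} is in hand; the only point requiring care is tracking the index convention, namely that $\NC_{n+1}$ denotes noncrossing partitions of an $(n+1)$-element set, so that the $n+1$ appearing in the rank formula agrees with the total size of the blocks.
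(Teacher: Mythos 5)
Your proposal is correct and follows essentially the same route as the paper's proof: reduce via \cref{prop: gen by min} and the identity $M(\chi(\pi))=J(\pi)$ from the proof of \cref{thm:nc-bijection}, then count $|J(\pi)|=(n+1)-\operatorname{bk}(\pi)$ and match it to the Kreweras rank. Your block-by-block summation merely makes explicit the counting the paper states in one line, and your attention to the $\NC_{n+1}$ indexing convention is the same bookkeeping the paper relies on implicitly.
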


\begin{proof}
By \cref{prop: gen by min}, the minimal generating number for $\chi(\pi)$ is equal to the cardinality of $M(\chi(\pi))$. As noted in the proof of \cref{thm:nc-bijection}, $M(\chi(\pi))=J(\pi)$. Note that $J(\pi)$ consists precisely of the elements of $[n]$ that are not maximal within their block in $\pi$, thus, the cardinality of $J(\pi)$ is $n+1-\operatorname{bk}(\pi)$, as desired.
\end{proof}

Let $\transfers_k([n])$ denote the set of transfer systems on $[n]$ minimally generated by $k$ edges.

\begin{cor}
For all $1\le k\le n+1$,
\[
\lvert\transfers_k([n])\rvert = \NC_{n+1}(k)=\frac{1}{n+1}\binom{n+1}{k}\binom{n+1}{k-1}.
\]
\hfill\qedsymbol
\end{cor}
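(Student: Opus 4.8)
The plan is to combine the three results already in hand: the bijection $\psi$ of \cref{thm:nc-bijection}, the rank identification of \cref{prop: NC maximal edges}, and the Narayana enumeration of \cref{prop: Narayana rank}. Since $\psi\colon \transfers([n]) \to \NC_{n+1}$ is a bijection with inverse $\chi$, it suffices to understand which noncrossing partitions correspond to transfer systems with a prescribed minimal generating number, and then to count them.

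First I would identify the image of $\transfers_k([n])$ under $\psi$. By definition, $\transfers_k([n])$ consists of the transfer systems $\cR$ whose minimal generating number equals $k$. By \cref{prop: NC maximal edges}, for every $\pi \in \NC_{n+1}$ the Kreweras rank of $\pi$ equals the minimal generating number of $\chi(\pi) = \psi^{-1}(\pi)$. Therefore $\cR \in \transfers_k([n])$ if and only if $\psi(\cR)$ has Kreweras rank $k$, so $\psi$ restricts to a bijection between $\transfers_k([n])$ and the set of rank-$k$ partitions in $\NC_{n+1}$. In particular $\lvert\transfers_k([n])\rvert = \NC_{n+1}(k)$.

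Finally I would evaluate $\NC_{n+1}(k)$ using \cref{prop: Narayana rank}. That proposition gives $\NC_n(k) = \tfrac1n\binom nk\binom n{k-1}$; instantiating it at $n+1$ in place of $n$ yields
\[
\NC_{n+1}(k) = \frac{1}{n+1}\binom{n+1}{k}\binom{n+1}{k-1},
\]
which is exactly the claimed formula.

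There is no genuine obstacle here, as the corollary is essentially the composition of the three cited results; the argument is a matter of transport along $\psi$. The only point demanding care is index bookkeeping: the bijection lands in $\NC_{n+1}$ rather than $\NC_n$, so the Narayana formula must be applied at $n+1$, and one should confirm that the stated range $1 \le k \le n+1$ matches the range of attainable Kreweras ranks (equivalently, of minimal generating numbers), which run from $1$ up to the rank of the finest partition of an $(n+1)$-element set.
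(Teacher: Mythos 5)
Your proposal is correct and takes essentially the same route as the paper, whose proof is the same one-line transport: $\chi$ (equivalently $\psi$) carries rank-$k$ noncrossing partitions to transfer systems on $[n]$ minimally generated by $k$ edges (\cref{prop: NC maximal edges}), and \cref{prop: Narayana rank} applied with $n+1$ in place of $n$ gives the count. One slip in your closing aside: the \emph{finest} partition (all singletons) has Kreweras rank $0$ and corresponds to the trivial transfer system with minimal generating number $0$, while the largest rank is attained by the coarsest, one-block partition --- this is why $k=0$ is excluded from the stated range.
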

\begin{proof}
The bijection $\chi$ takes rank $k$ noncrossing partitions of $[n]$ to transfer systems on $[n]$ minimally generated by $k$ edges. Since the former are counted by the indicated Narayana numbers, so are the latter.
\end{proof}


\bibliographystyle{amsalpha}
\bibliography{bib}

\end{document}